\documentclass[
reqno, 
12pt]{amsart}
\usepackage{amssymb,amsmath, amsthm,latexsym, mathrsfs}

\usepackage{a4wide}

\usepackage{hyperref}

\theoremstyle{plain}

\newtheorem{lemma}{Lemma}[section]

\newtheorem{proposition}[lemma]{Proposition} 
\newtheorem{corollary}[lemma]{Corollary}

\newtheorem{remark}[lemma]{Remark}

\newtheorem{definition}[lemma]{Definition}

\parskip=\bigskipamount

\def\Z{\mathbb Z}
\def\R{\mathbb R}
\def\d{\delta}

\def\t{\times}
\def\o{\otimes}

\def\h{\hookrightarrow}
\def\ra{\rightarrow}

\def\a{\alpha}
\def\b{\beta}

\def\D{\Delta}
\def\G{\Gamma}

\def\s{\sigma}

\parindent=0pt

\title[General linear groups over non-archimedean local division algebra]
{Remark on representation theory of general linear groups over a non-archimedean local division algebra}

\mark{Marko Tadi\'c}

\author{Marko Tadi\'c}

\address{Department of Mathematics, University of Zagreb
\\
Bijeni\v{c}ka 30, 10000 Zagreb,
 Croatia\\
Email: \tt tadic{\char'100}math.hr}

\keywords{non-archimedean local fields, division algebras, general linear  groups, Speh representations, parabolically induced representations, reducibility, unitarizability}
\subjclass[2000]{Primary: 22E50}

\thanks{This work has been  supported  by Croatian Science Foundation under the
project 9364.}
\date{\today}

\begin{document}

\begin{abstract} 
In this paper we give a  simple (local) proof of two  principal results about irreducible tempered representations of general linear groups over a non-archimedean local division algebra. We give a proof of the parameterization of the irreducible square integrable representations of these groups by segments of cuspidal representations, and a proof of the irreducibility of the tempered parabolic induction. Our proofs are based on Jacquet modules (and the Geometric Lemma, incorporated  in the structure of a Hopf algebra). We  use only some very basic general facts of the representation theory of reductive $p$-adic groups (the theory that we use was completed more then three decades ago, mainly in 1970-es). Of the specific results for general linear groups over $A$, basically we use only  a very old result of G.I.  Ol'\v sanski\v i,  which says that there exist complementary series starting from $Ind(\rho\o\rho)$ whenever $\rho$ is a unitary irreducible cuspidal representation. In appendix of \cite{L-M}, there is also a simple local proof of these results, based on a slightly different approach.
\end{abstract}

\maketitle

\setcounter{tocdepth}{1}


\section{Introduction}\label{intro}

Let $A$ be a local non-archimedean division algebra.
Two major steps in classifying unitary duals of general linear groups over a local non-archimedean division algebra $A$ modulo cuspidal representations, were done in \cite{BR} by D. Renard and I. Badulescu, and in \cite{Se} by V. S\'echerre.
The unitarizability of Speh representations was proved in \cite{BR} using global methods (and a simple form of the trace formula of Arthur\footnote{See 
\cite{Ba-JL} for more details regarding the trace formula used in the proof of the unitarizability in \cite{BR}.}). I. Badulescu has obtained in \cite{Ba-Speh} a local proof of this fact, simplifying substantially the previous proof of unitarizability of Speh representations.
The irreducibility of the unitary parabolic induction for groups $GL(n,A)$ was proved in \cite{Se} using the classification of simple types for these groups (and relaying on several other very powerful tools). In \cite{L-M}, E. Lapid and A. M\'inguez have recently obtained a proof of this fact based on Jacquet modules. Their proof is significantly simpler then the previous proof of the irreducibility of the unitary parabolic induction.

A consequence of  these two new proofs is that now we have  a very simple and rather elementary solution of the unitarizability problem for the groups $GL(n,A)$. These new developments  also simplify the classification in the case of the commutative $A$ obtained in \cite{T-AENS}. For example, the proof of  E. Lapid and A. M\'inguez  enables avoiding the use of a very important and delicate result of J. Bernstein from \cite{Be-P-inv} about restrictions of irreducible unitary representations of $GL(n)$ over a commutative $A$, to the mirabolic subgroup. Since this result does not hold for non-commutative $A$, E. Lapid and A. M\'inguez approach 
further provides a uniform approach to the commutative and non-commutative case (making no difference between them, which was the case before).

The  unitarizability that we have discussed above was solved using the simple basic results of the theory of non-unitary duals of these groups. 
 These simple basic results in characteristic zero for the setting of the Langlands classification in the case of non-commutative $A$ were obtained in \cite{T-Crelle}\footnote{Such results in the setting of the Zelevinsky classification were proved in the field case in \cite{Z}, using the theory of derivatives.}. The methods of the proofs in \cite{T-Crelle} are very simple. They relay  on two results of \cite{DKV}. The first is that the square integrable representations of these groups are parameterized by segments of cuspidal representations. The second one is that the tempered parabolic induction is irreducible. The proofs of these results in \cite{DKV} are based on global methods (and the simple trace formula of P. Deligne and D. Kazhdan; see \cite{DKV} for more details). I. Badulescu transferred these results to the positive characteristics in \cite{Ba-positive}. His proof is not very simple. It includes transfer of orbital integrals, trace Paley-Wiener theorem of I. Bernstein, P. Deligne and D. Kazhdan, and it uses the close fields to relate the problem to the characteristic zero. Therefore, again in the background are global methods and the simple trace formula. A. M\'inguez and V. S\'echerre provided in \cite{M-S-1} a local proof of results of \cite{T-Crelle}, relaying at some point on simple types. Their paper directs much bigger generality. It address ("banal") modular representations, which include as special case the representations over the complex field. Their paper also covers additionally the case of Zelevinsky classification.

In this paper we give a proof of the parameterization of irreducible square integrable representations by segments of cuspidal representations, and a proof of the irreducibility of the tempered parabolic induction. Our paper uses the general facts of the representation theory of reductive $p$-adic groups (the theory that we use was completed more then three decades ago, mainly in 1970-es). Of the specific results for general linear groups over $A$, basically we use only  a very old result of G.I.  Ol'\v sanski\v i from \cite{O}\footnote{As E. Lapid indicated to us, this result is reproved in \cite{Sh+} in the field case (with a little bit of additional work; see Proposition in \cite{Sh+}). His proof naturally extends also to the division algebra case.},  which says that there exist complementary series starting from $Ind(\rho\o\rho)$ whenever $\rho$ is a unitary irreducible cuspidal representation. Then the positive reducibility point determines the character $\nu_\rho$, which is used to define the segments\footnote{A. M\'inguez and V. S\'echerre in their paper \cite{M-S-1} use simple types from \cite{M-S-2} instead of the G.I.  Ol'\v sanski\v i  result. The methods (and aims) of the paper of A. M\'inguez and V. S\'echerre and ours are very different (in our paper the unitarity is crucial, while in their paper the unitarity does not play a role at all).}. Using the Jacquet modules, in this paper we also reprove basic results in \cite{T-Crelle} (the proofs of \cite{T-Crelle} are more based on the properties of the Langlands classification of the non-unitary dual).
The first fact that we prove in this paper is the  irreducibility of the representation parabolically induced by a tensor product of several  square integrable representations corresponding to segments.
The second fact that we prove is the  irreducibility of the representation parabolically induced by a tensor product of two essentially square integrable representations corresponding to segments which are not linked (i.e. that their union is not a segment which is different from both of the segments). Now,  for getting the irreducibility of the tempered parabolic induction,  it is enough to prove that all irreducible square integrable representations are coming from segments. The proof of this fact was done by C. Jantzen in \cite{J} in the case of commutative $A$ (using Jacquet modules). Since his proof uses some facts of the Bernstein-Zelevinsky theory, we slightly modify his argument to apply to the case of general division algebra (since here we cannot relay on the Bernstein-Zelevinsky theory).

Together with Theorem 4.3   of \cite{Sh}\footnote{Basically, this theorem tells that $Ind(\rho\o|\ |_F\rho)$ reduces for an irreducible cuspidal representation $\rho$, in the case that $A$ is commutative.}, 
our paper provides also an alternative approach to some of the most basic results of the Bernstein-Zelevinsky theory
  in the setting of the Langlands classification 
  (here $A$ is commutative; see \cite{Ro} for more details).
  
   In the appendix of \cite{L-M} there is a simple local proof not using types
  of two  results of \cite{DKV} which we have mentioned above, on which
\cite{T-Crelle} relays.
 In this paper we give also such a proof of that two principal results. The proofs in the appendix of \cite{L-M} and in this paper have some parts in common, but the general strategy of the proofs is different (in our paper the  square integrability plays the central role). Another difference is that proofs in the appendix of \cite{L-M} are much more concise then the proofs in this paper (the appendix of \cite{L-M} covers more general setting). This paper is written in more elementary way, and it is more self-contained (which contributes to the length of the paper) and we consider that it may be of some interest to have also this approach available. The paper \cite{L-M} is very important, and it opens new possibilities  to attack some fundamental problems of the representation theory of $p$-adic general linear groups.

The first version of this simple paper was finished in the spring of 2014,  when the author was guest of the  Hong Kong University of Science and Technology, and we are thankful  for their kind hospitality. Lengthy discussions with E. Lapid helped us to make clear some basic topics used in this paper.
I. Badulescu suggested us a  number of corrections and  modifications, which make the paper easier to read. Discussions with A. M\'inguez, A. Moy and G. Mui\'c during writing of this pretty elementary paper were useful to us. We are thankful to all of them, and to
 E. Lapid, C. M\oe glin, G. Mui\'c,
F. Shahidi and J.-L. Waldspurger for  discussions related to the question of uniqueness (up to a sign) of the reducibility point on the "real axes", when one induces from a maximal parabolic subgroup by an irreducible cuspidal representation.

The content of the paper is the following. The second section introduces notation used in the paper and recalls of some very basic facts that we shall use later. The irreducibility of a product of irreducible square integrable representation corresponding to segments of cuspidal representations is proved in the third section.
In the fourth section of this paper we prove the irreducibility of representation parabolically induced by a tensor product of two essentially square integrable representations corresponding to segments which are not linked. In the fifth section we collect some very elementary facts about Jacquet modules that we need for the sixth section, where we recall of the Jantzen's completeness argument for irreducible square integrable representations. In the seventh section we present determination of the composition series, based on the Jacquet module methods, of the reducible product of two irreducible essentially square integrable representations   (these composition series were determined in \cite{T-Crelle} using the properties of the Langlands classification).

\section{Notation and preliminary results}\label{notations}

For a reductive $p$-adic group $G$, the Grothendieck group $R(G)$ of the category $Alg_{f.l.}(G)$ of smooth representations of finite length has natural ordering $\leq$ (the cone of positive elements is generated by irreducible representations). There is mapping from $Alg_{f.l.}(G)$ into $R(G)$ called semi simplification, and denoted by $s.s.$. To simplify notation, we shall write below the fact 
$s.s.(\pi_1)\leq s.s.(\pi_2)$ simply as $\pi_1\leq \pi_2$.

Fix a non-archimedean local algebra $A$. 
We shall consider in this paper admissible representations of groups $G_n:=GL(n,A)$ of finite length. We use in this paper the notation of \cite{T-Crelle} (most of this notation is an extension of the notation of J. Bernstein and A. V. Zelevinsky which they were using in their earlier papers). We shall recall very briefly of some notation that we shall use very often. We use the notation 
$
\t
$
 for the parabolic induction from maximal standard\footnote{Standard parabolic subgroups are those ones containing the regular upper triangular matrices. All the Jacquet modules that we shall consider in this paper will be with respect to standard maximal parabolic subgroups.} parabolic subgroups.
We shall use the  graded positive Hopf algebra $R$ introduced  in the  third section of \cite{T-Crelle} (introduced in the same way as it was introduced in \cite{Z} in the field case). The multiplication in $R$ is defined using the parabolic induction, while the comulitplication $m^*$ is defined using Jacquet modules with respect to maximal standard  parabolic subgroups. The algebra is commutative (but not cocommutative). All the Jacquet modules that we shall need in this paper, we shall compute using this Hopf algebra.

We identify characters of $A^\t$ with that of $F^\t$ in usual way (using non-commutative determinant). Then for two admissible representations $\pi_1$ and $\pi_2$ holds
\begin{equation}
\label{chi}
\chi(\pi_1\t\pi_2)\cong (\chi\pi_1)\t(\chi \pi_2).
\end{equation}
Denote
$$
\nu:=| \det |_F,
$$
where $\det$ denotes the determinate homomorphism, defined by Dieudonn\'e.

Let $\rho_1$ and $\rho_2$ be  irreducible cuspidal representations of general linear groups over a fixed non-archimedean local algebra $A$. 
Suppose that $\rho_1\t\rho_2$ reduces.
Suppose that the central character of  $\rho_1\t\rho_2$ is unitary.
 We shall now consider two possibilities. 
 
 The first is when $\rho_1$ is unitary (then $\rho_2$ is also unitary). Now Frobenius reciprocity implies that 
 $\rho_1\o\rho_2$ has multiplicity (at least) two in the Jacquet module of  $\rho_1\t\rho_2$. Since this representation has precisely two irreducible subquotients in the Jacquet modules which are cuspidal, and they are  $\rho_1\o\rho_2$ and  $\rho_2\o\rho_1$, this implies 
$$
 \rho_1\cong\rho_2.
$$

Suppose now that $\rho_2$ is not unitary. Then Casselman square integrability criterion (Theorem 6.5.1 of \cite{Ca}) implies that  $\rho_1\t\rho_2$ has an irreducible square integrable subquotient. This implies that  $\rho_1\t\rho_2$ and  $\rho_1^+\t\rho_2^+$ have an irreducible subquotient in common, which further implies that  $\rho_1\cong\rho_1^+$ or  $\rho_1\cong\rho_2^+$ ($\pi^+$ denotes the Hermitian contragredient of $\pi$). The first possibility is excluded, since $\rho_1$ is not unitary. Thus, the second possibility must hold.
We can write
$$
\rho_1=
\nu^{s/2}\rho,
$$
with $\rho$ unitary and $s\in\mathbb R$, $s\ne 0$. Interchanging $\rho_1$ and $\rho_2$ if necessary, we can assume $s> 0$. 
Observe that now the above condition implies
$$
\rho_2\cong \nu^{-s/2}\rho.
$$
From this (using \eqref{chi}) follows that
$$
\rho
\t
\nu^s\rho
$$
reduces.

It is well known (and easy to prove) that there exists some $\epsilon>0$ such that $\nu^t\rho\t\nu^{-t}\rho$ is irreducible for all $0<t<\epsilon$. Further, Theorem 2 of \cite{O} implies that one can chose $0<\epsilon'\leq \epsilon$ such that $\nu^t\rho\t\nu^{-t}\rho$ is unitarizable for all $0<t<\epsilon'$ (it is also irreducible for that $t$'s). In other words, we have complementary series here.
Since complementary series cannot go to infinity (see \cite{T-Geom}), we must have reducibility of $\nu^t\rho\t\nu^{-t}\rho$ for some $t>  0$. Chose smallest such $t\geq 0 $ such that $\nu^t\rho\t\nu^{-t}\rho$ reduces, and denote it by $\frac s2$.

Recall that    Lemma 1.2  of \cite{Si-2}\footnote{See also Theorem 1.6 there, and i) of Proposition 4.1 of \cite{He}.} implies that above $s$ is unique. It is determined by $\rho$. Therefore, we shall denote it by
$$
s_\rho.
$$
Observe that now the uniqueness of $s_\rho\geq 0$ which gives reducibility, implies
\footnote{ Recall that \cite{DKV} gives a  precise description  of $s_\rho$ in terms of the Jacquet-Langlands correspondence
 (among others,  it implies that it is an integer which divides the rank of $A$). 
 The theory of types gives a different description  of $s_\rho$ (see section 4 of \cite{Se}). We shall not use any of such additional information in this paper. 
In this manuscript we shall relay neither on (simple) trace formula (and global methods), nor on the theory of types.}
$$
s_\rho>0
.
$$

We denote
$$
\nu_\rho=
\nu^{s_\rho}.
$$
The relation \eqref{chi} implies $s_\rho=s_{\chi\rho}$ and further $\nu_\rho=\nu_{\chi\rho}$.

The set of the form
$$
\{\rho, \nu_\rho\rho,\dots,\nu_\rho^k\rho\}
$$
will be called a segment (in cuspidal representations), and it will be denoted by
$$
[\rho,\nu_\rho^k\rho].
$$

The representation 
$$
\nu_\rho^k\rho\t\nu_\rho^{k-1}\rho\t\dots \t\rho,
$$
contains  a unique irreducible subrepresentation,
which is denoted by
$$
\d([\rho,\nu_\rho^k\rho])
$$
(uniqueness of the irreducible subrepresentation follows from the regularity, i.e.  from the fact that all the Jacquet modules of the induced representation are multiplicity free).

Observe that for $0\leq i\leq k-1$ we have
$$
\nu_\rho^k\rho\t\nu_\rho^{k-1}\rho\t\dots \t\nu_\rho^{i+2}\rho\t\d([\nu_\rho^i\rho,\nu_\rho^{i+1}\rho])\t\nu_\rho^{i-1}\rho\t\dots\t\nu_\rho\rho \t\rho
\h
\nu_\rho^k\rho\t\nu_\rho^{k-1}\rho\t\dots \t\rho.
$$
From this and the definition of 
 $\d([\rho,\nu_\rho^k\rho  ])$ follows
$$
 \d([\rho,\nu_\rho^k\rho  ])\h 
\nu_\rho^k\rho\t\nu_\rho^{k-1}\rho\t\dots \t\nu_\rho^{i+2}\rho\t\d([\nu_\rho^i\rho,\nu_\rho^{i+1}\rho])\t\nu_\rho^{i-1}\rho\t\dots\t\nu_\rho\rho \t\rho.
$$
Applying induction, this implies that the minimal non-trivial (standard) Jacquet module of $ \d([\rho,\nu_\rho^k\rho  ])$ is 
$$
\nu_\rho^k\rho\o\nu_\rho^{k-1}\rho\o\dots \o\rho.
$$
Further, if $\pi$ is an irreducible representation which has the above representation in its Jacquet module, then $\pi \cong 
 \d([\rho,\nu_\rho^k\rho  ])$. This characterization of representations  $ \d([\rho,\nu_\rho^k\rho  ])$ directly implies the following formula\footnote{In \cite{T-Crelle}, we have obtained this formula in the same way  (from (iii) of Proposition 3.1 and (i) of Proposition 2.7 there).}
\begin{equation}
\label{m^*}
m^*(\d([\rho,\nu_\rho^k\rho]))=\sum_{i=-1}^{k}
\d([\nu_\rho^{i+1}\rho,\nu_\rho^k\rho])\o
\d([\rho,\nu_\rho^{i}\rho]).
\end{equation}

The square integrability criterion of Casselman\footnote{See \eqref{=} and \eqref{>} in section \ref{square}.} now implies that this representation is essentially square integrable.


For a segment $\D$ of cuspidal representations one can easily prove   that
$$
\d(\D)\tilde{\ }\cong \d(\tilde\D),
$$
where $\tilde\D=\{\tilde\rho;\rho\in \D\}.$
Further, from \eqref{chi} follows
$$
\chi\d(\D)\cong\d(\chi\D),
$$
where $\chi\D=\{\chi\rho;\rho\in\D\}.$ 

 We shall denote
 $$
 \D^+=\{\rho^+:\rho\in \D\},
 $$
  where $\rho^+$ denotes the Hermitian contragredient of $\rho$. We shall say that $\D$ is unitary if $\D=\D^+$. In this case $\d(\D)$ is unitarizable (since it is square integrable modulo center).


Let $\s=\d([\rho,\nu_\rho^k\rho])$. Then we shall denote 
$
s_\rho
$
also by $s_\s$,
and $\nu_\rho$ by $\nu_\s$.

Let $\pi$ and $\s$ be a representations of finite length of $G_n$ and $G_m$ respectively, where $n\geq m$. Suppose that $\s$ is irreducible. We denote by
$$
m^*_{-\o\sigma}(\pi)
$$
the sum of all irreducible terms in $m^*(\pi)$ which are of the form $\tau\o\s$ (counted with multiplicities). Analogously we define $m^*_{\sigma\o-}(\pi)$.

\section{Irreducibility - the case of several unitary segments}\label{irr-unitary-segments}


First we have a very well known\footnote{It is a special case of a result that holds for a general reductive group. For completeness (and since it is very simple), we present the proof for general linear groups here.} 

\begin{lemma}
\label{regular}
Let $\D_1,\D_2,\dots ,\D_n$ be different unitary segments of cuspidal representations. Then the representation
$$
\d(\D_1)\t\d(\D_2)\t\dots \t\d(\D_n)
$$
is irreducible.
\end{lemma}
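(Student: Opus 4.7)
My plan is to exploit unitarity twice: once to guarantee that $\pi := \d(\D_1) \t \cdots \t \d(\D_n)$ is semisimple, and once to ensure that only one combination of ``cuts'' from the Hopf algebra comultiplication can reproduce the data on the Levi. Each $\d(\D_i)$ is essentially square integrable and, since $\D_i = \D_i^+$, has unitary central character, hence is unitarizable (as noted in the preliminaries). Parabolic induction from the unitary Levi representation $\s := \d(\D_1) \o \cdots \o \d(\D_n)$ therefore yields a unitary $\pi$, so $\pi = \bigoplus_j m_j \pi_j$ decomposes into pairwise non-isomorphic irreducibles $\pi_j$.

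The reduction to a Jacquet-module multiplicity is via Frobenius reciprocity: each summand $\pi_j$ gives a non-zero embedding $\s \h r_M(\pi_j) \subseteq r_M(\pi)$, where $M$ is the Levi of the inducing parabolic. Hence the multiplicity of $\s$ as a subquotient of $r_M(\pi)$ is at least $\sum_j m_j$, and it suffices to show this multiplicity equals one; then $\sum_j m_j = 1$ forces $\pi$ to be irreducible.

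For the multiplicity bound I would iterate the Hopf-algebra identity
\[
m^*(\pi) = m^*(\d(\D_1)) \cdots m^*(\d(\D_n)),
\]
expanding each factor via formula \eqref{m^*} and tracking which combinations of cuts contribute $\s$. The analysis splits by cuspidal support. If $\D_i$ and $\D_j$ sit on different cuspidal lines (unrelated by a $\nu^{\R}$-twist), disjointness of their supports rules out any cross-contribution. If they share a line, the unitarity condition $\D_i = \D_i^+$ forces each such $\D_i$ to be centered about the unitary axis on that line; distinctness and centering then imply that within each parity class of length the $\D_i$ are strictly nested, and any proper head or tail of a centered segment is non-centered and hence cannot appear as a piece of another centered segment on the same line. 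In both cases only the two extreme cuts of each $\d(\D_i)$ in \eqref{m^*} (keeping $\d(\D_i)$ whole on one side) contribute, and exactly one coordinated choice produces $\s$.

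The main obstacle is the combinatorial bookkeeping when several segments sit on a common cuspidal sub-line, namely ruling out that a recombination of proper sub-pieces from distinct centered segments accidentally reconstructs one of the $\d(\D_j)$. I would organize this by induction on $n$: the base case $n = 2$ is a direct expansion and case analysis on the pair of cuts $(a_1, a_2)$ appearing in \eqref{m^*}, and the inductive step groups the last $n-1$ factors as $X := \d(\D_2) \t \cdots \t \d(\D_n)$, applies the inductive hypothesis to identify $\d(\D_2) \o \cdots \o \d(\D_n)$ as a multiplicity-one subquotient of the relevant iterated Jacquet module of $X$, then reuses the $n=2$ analysis on top.
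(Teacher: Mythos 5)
Your proposal is correct and follows essentially the same route as the paper: unitarizability gives semisimplicity, Frobenius reciprocity reduces irreducibility to showing that $\d(\D_1)\o\cdots\o\d(\D_n)$ occurs with multiplicity one in the Jacquet module, and that multiplicity is extracted from the Hopf-algebra comultiplication using the fact that distinct unitary segments on a common cuspidal line are nested. The paper organizes your combinatorial bookkeeping more cleanly by first enumerating the segments so that both ends of $\D_j$ lie in no $\D_k$ with $k>j$, and then peeling off one factor at a time via $m^*_{\d(\D_1)\o-}$ and transitivity of Jacquet modules (which is the precise form of the "extremal end" argument you gesture at, rather than the non-centeredness of proper sub-pieces).
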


\begin{proof} Since segments $\D_i$ are different and unitary, we can enumerate them in a way that 
\begin{equation}
\label{indexing}
\text{for   $1\leq j<n$, 
both ends of  $\D_j$ are not contained in   $\D_k$, for any $j<k\leq n$.}
\end{equation}
 The commutativity of algebra $R$ implies that it is enough to prove the lemma for such enumerated segments.

Now the formula for $m^*(\d(\D_1))$ implies that
$$
m^*_{\d(\D_1)\o-}(\d(\D_1)\t \d(\D_2)\t\dots \t\d(\D_n))= \d(\D_1)\o \d(\D_2)\t\dots \t\d(\D_n).
$$
Continuing this procedure with $\D_2,\dots ,\D_n$, etc. (in each step delating the segment with the lowest index) and using the transitivity of Jacquet modules, we get that the multiplicity of $\d(\D_1)\o \d(\D_2)\o\dots \o\d(\D_n)$ in the Jacquet module of $\d(\D_1)\t \d(\D_2)\t\dots \t\d(\D_n)$ is one. Now the unitarizability of $\d(\D_1)\t \d(\D_2)\t\dots \t\d(\D_n)$ and the Frobenius reciprocity imply the irreducibility.
\end{proof}

Let an irreducible representation $\s$ of some $G_m$ be a subquotient of $\rho_1\dots\t\rho_l$, where $\rho_i$ are irreducible and cuspidal. Then the multiset $(\rho_1,\dots,\rho_l)$  is called the cuspidal support of $\s$. It is denoted by 
$$
\text{supp$(\s)$.}
$$
For each  $1\leq i\leq n$ fix a  finite multiset $X_i$  of irreducible cuspidal representations of general linear groups over $A$. Let $\pi$ be a representation of finite length of some $G_m$. We denote by
$$
m^*_{\text{supp}(X_1,\dots,X_n)}(\pi)
$$
the sum\footnote{The sum is in the Grothendieck group of the category of finite-length representations of the corresponding Levi subgroup.} (counted with multiplicities) of all irreducible terms in the corresponding Jacquet module of $\pi$ which are of the form $\tau_1\o\dots\o\tau_n$  such that supp$(\tau_i)=X_i$ for all $1\leq i\leq n$.
\begin{lemma}
\label{same}
Let $\D$ be a unitary segment of cuspidal representations. Then 
$$
\underbrace{\d(\D)\t\dots\t\d(\D)}_{k-\text{times}}
$$
is irreducible\footnote{Twisting by a character, one directly sees that the claim of the lemma holds without assumption that the segment is unitary.}.
\end{lemma}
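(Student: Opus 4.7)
The plan is to proceed by induction on $k$, with the case $k=1$ trivial. In the inductive step set $\sigma:=\d(\D)^{\t(k-1)}$; by the inductive hypothesis $\sigma$ is irreducible, and it is unitarizable as a unitary parabolic induction of the unitarizable representation $\d(\D)$. Consequently $\pi:=\d(\D)\t\sigma=\d(\D)^{\t k}$ is itself unitary and hence semisimple, so to prove $\pi$ irreducible it suffices to verify $\dim\mathrm{End}(\pi)=1$. By Frobenius reciprocity this reduces to computing $\dim\mathrm{Hom}_M\!\bigl(r_P(\pi),\,\d(\D)\o\sigma\bigr)$, where $P$ is the standard maximal parabolic whose Levi $M$ has the two blocks of the appropriate sizes.

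I would first compute the Jordan--H\"older multiplicity of $\d(\D)\o\sigma$ in $r_P(\pi)$ using the Hopf algebra identity $m^*(\pi)=m^*(\d(\D))\cdot m^*(\sigma)$ combined with formula \eqref{m^*}, the latter applied iteratively via $m^*(\sigma)=m^*(\d(\D))^{k-1}$. A cuspidal-support analysis in the spirit of the proof of Lemma~\ref{regular} reveals that the only $k$-tuples $(i_1,\ldots,i_k)\in\{-1,0,\ldots,l\}^k$ contributing the term $\d(\D)\o\sigma$ are the $k$ tuples with exactly one entry equal to $-1$ and the remaining $k-1$ entries equal to $l$; no intermediate index can produce a valid cuspidal support on both sides. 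Thus $\d(\D)\o\sigma$ appears as a Jordan--H\"older factor of $r_P(\pi)$ with multiplicity exactly $k$.

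The main obstacle will be to promote this subquotient count of $k$ into a Hom-space count of $1$. I expect this to come out of the Hopf-algebra-encoded geometric lemma filtration on $r_P(\pi)=r_P(\mathrm{Ind}_P^G(\d(\D)\o\sigma))$ indexed by double cosets in $W_M\backslash W_G/W_M$: the trivial double coset contributes the copy of $\d(\D)\o\sigma$ that sits on top of the filtration as a genuine quotient, while the remaining $k-1$ copies come from a single non-trivial double coset and are buried as strict subquotients deeper in the filtration. An equivalent route, closer in spirit to the approach in this paper, is to propagate the uniqueness of the irreducible subrepresentation of $\d(\D)$ (itself coming from the multiplicity-freeness of its Jacquet modules, as used in the very definition of $\d(\D)$) through the induction to obtain a unique irreducible subrepresentation of $\pi$; combined with the semisimplicity of $\pi$ this forces $\pi$ to be isotypic of a single irreducible, and a minimal-Jacquet-module multiplicity count then pins the isotypic multiplicity down to $1$, completing the argument.
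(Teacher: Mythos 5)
Your setup is sound up to and including the multiplicity count: $\pi$ is unitarizable and hence semisimple, irreducibility is equivalent to $\dim\mathrm{End}(\pi)=1$, and the cuspidal-support argument correctly shows that $\d(\D)\o\sigma$ occurs in $r_P(\pi)$ with multiplicity exactly $k$. But the step you yourself flag as ``the main obstacle'' is exactly where the proof is missing, and neither of your two suggested routes closes it. For the geometric-lemma route, to conclude $\dim\mathrm{Hom}_M(r_P(\pi),\d(\D)\o\sigma)=1$ you must actually show that $\d(\D)\o\sigma$ is not a quotient of any non-trivial graded piece of the double-coset filtration; you assert that the other $k-1$ copies are ``buried as strict subquotients'' but give no argument, and this is precisely the hard point --- it is why the argument that works for Lemma \ref{regular} (where the relevant multiplicity is $1$) does not carry over here. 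For the alternative route, $\pi$ has no a priori reason to have a unique irreducible subrepresentation: its minimal Jacquet module is far from multiplicity-free (already $\rho\t\rho$ has minimal Jacquet module $2\cdot(\rho\o\rho)$), so the regularity argument behind the definition of $\d(\D)$ does not propagate, and ``semisimple with a unique irreducible subrepresentation'' is essentially a restatement of the claim being proved.

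The paper sidesteps this obstruction by a different device: assuming $\pi=\pi_1\oplus\pi_2$, it introduces auxiliary segments $\G_i=[\nu_\rho^{a+1}\rho,\nu_\rho^{a+i}\rho]$ and the pairwise distinct unitary segments $\D_i=\G_i^+\cup\D\cup\G_i$, observes that $\d(\D_1)\t\dots\t\d(\D_k)$ is irreducible by Lemma \ref{regular} and satisfies $\d(\D_1)\t\dots\t\d(\D_k)\leq \d(\G_1)\t\dots\t\d(\G_k)\t\pi_j\t\d(\G_1^+)\t\dots\t\d(\G_k^+)$ for some $j\in\{1,2\}$, and then compares the graded Jacquet-module pieces $m^*_{\mathrm{supp}(X_+,X,X_-)}$ of both sides; a length count yields a contradiction. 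To complete your argument instead, you would need a genuine proof that the $k-1$ extra copies of $\d(\D)\o\sigma$ contribute nothing to the Hom space (roughly what Lapid--M\'inguez do in their appendix); as written, that proof is absent.
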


\begin{proof}  Denote the representation whose irreducibility we want to prove in the above lemma by $\pi$. Suppose that it is not irreducible. Then we can write
$$
\pi=\pi_1\oplus\pi_2,
$$
where $\pi_i$ are non-zero subrepresentations.

Write
$$
\D=[\nu^a_\rho\rho,\nu^{-a}_\rho\rho].
$$
Introduce segments
$$
\G_i=[\nu^{a+1}_\rho\rho,\nu^{a+i}_\rho\rho], \quad i=1,\dots,k,
$$
$$
\D_i=\G_i^+\cup \D\cup \G_i=[\nu^{-a-i}_\rho\rho,\nu^{-a+i}_\rho\rho], \quad i=1,\dots,k.
$$
We can consider segments as multisets, and introduce multisets
$$
X_-=\sum_{i=1}^k \G_i^+,  \qquad X_+=\sum_{i=1}^k \G_i, \qquad X=\sum_{i=1}^k\D.
$$

Suppose that $\D'$ and $\D''$ are disjoint segment of cuspidal representations, such that their union is again  a segment of cuspidal representations. Then one proves easily that $\d(\D'\cup\D'')$ is a subquotient of  $\d(\D')\t\d(\D'')$ (for a proof of this very elementary fact see the very beginning of the proof of Lemma \ref{l1}).
This (together with the commutativity of $R$) directly implies  
$$
\d(\D_1)\t\dots\t\d(\D_k)\leq \d(\G_1)\t\dots\t\d(\G_k)\t\pi\t \d(\G_1^+)\t\dots\t\d(\G_k^+).
$$
Since the representation on the left hand side is irreducible by the previous lemma, we get that 
$$
\d(\D_1)\t\dots\t\d(\D_k)\leq \d(\G_1)\t\dots\t\d(\G_k)\t\pi_i\t \d(\G_1^+)\t\dots\t\d(\G_k^+),
$$
for at least one $i\in\{1,2\}$. Fix some  $i$ satisfying this. Then obviously
$$
m^*_{\text{supp}(X_+,X,X_-)}(\d(\D_1)\t\dots\t\d(\D_k))
\leq 
\hskip60mm
$$
$$
\hskip40mm
m^*_{\text{supp}(X_+,X,X_-)}(\d(\G_1)\t\dots\t\d(\G_k)\t\pi_i\t \d(\G_1^+)\t\dots\t\d(\G_k^+)).
$$
Denote
$$
\Lambda= \d(\G_1)\t\dots\t\d(\G_k)\o\pi\o \d(\G_1^+)\t\dots\t\d(\G_k^+),
$$
$$
\Lambda_i= \d(\G_1)\t\dots\t\d(\G_k)\o\pi_i\o \d(\G_1^+)\t\dots\t\d(\G_k^+).
$$
Obviously $\Lambda \not\leq \Lambda_i$ (consider the lengths of both sides).

Using the formula \eqref{m^*}, one directly checks that
$$
\Lambda\leq m^*_{\text{supp}(X_+,X,X_-)}(\d(\D_1)\t\dots\t\d(\D_k))\footnote{Actually, we have here equality.}
$$
and
$$
m^*_{\text{supp}(X_+,X,X_-)}\d(\G_1)\t\dots\t\d(\G_k)\t\pi_i\t \d(\G_1^+)\t\dots\t\d(\G_k^+))=\Lambda_i.
$$
Now the last inequality above implies $\Lambda\leq \Lambda_i$. This contradict to our previous observation that $\Lambda \not\leq \Lambda_i$. The proof of the lemma is now complete.
\end{proof}

\begin{proposition}
\label{irr-uni-seg}
Let $\G_1,\G_2,\dots ,\G_m$ be  unitary segments of cuspidal representations. Then the representation
$$
\d(\G_1)\t\d(\G_2)\t\dots \t\d(\G_m)
$$
is irreducible.
\end{proposition}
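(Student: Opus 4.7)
The plan is to combine Lemmas \ref{regular} and \ref{same} by first grouping identical segments. Let $\D_1,\dots,\D_n$ be the distinct segments that appear among $\G_1,\dots,\G_m$, with respective multiplicities $k_1,\dots,k_n$. By the commutativity of the multiplication in $R$, I would rewrite
$$
\d(\G_1)\t\cdots\t\d(\G_m)\cong \s_1\t\cdots\t\s_n,\qquad
\s_j:=\underbrace{\d(\D_j)\t\cdots\t\d(\D_j)}_{k_j-\text{times}}.
$$
Lemma \ref{same} gives that each $\s_j$ is irreducible, and it is unitarizable (as a parabolically induced representation from a unitarizable representation on a Levi); hence $\pi:=\s_1\t\cdots\t\s_n$ is unitarizable. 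Using the commutativity of $R$ once more, I would reorder the distinct $\D_1,\dots,\D_n$ so as to satisfy the indexing condition \eqref{indexing}: for $1\leq j<n$, neither endpoint of $\D_j$ lies in $\D_k$ for any $j<k\leq n$.

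Since $\pi$ is unitarizable it decomposes as a direct sum of irreducibles, so by Frobenius reciprocity it is enough to exhibit one irreducible term of its Jacquet module with multiplicity one. My candidate is $\s_1\o\cdots\o\s_n$. Applying the formula \eqref{m^*} to each $\d(\D_j)$ inside the Hopf-algebra product that computes $m^*(\pi)$, the indexing condition forces any occurrence of the endpoints of $\D_1$ in the left Jacquet factor to come from $\s_1$ alone (no other $\D_k$ contains these endpoints), and then matching the total cuspidal support $k_1\cdot\D_1$ on the left forces each of the $k_1$ copies of $\d(\D_1)$ in $\s_1$ to contribute the whole segment. This would yield
$$
m^*_{\s_1\o-}(\pi)=\s_1\o(\s_2\t\cdots\t\s_n).
$$
Iterating the same argument with $\s_2,\s_3,\dots$ and invoking the transitivity of Jacquet modules gives $m^*_{\s_1\o\cdots\o\s_n}(\pi)=\s_1\o\cdots\o\s_n$. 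Frobenius reciprocity then produces a unique irreducible subrepresentation of $\pi$, and unitarizability of $\pi$ forces it to be irreducible.

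The hardest part, I expect, will be the multiplicity-one analysis when segments repeat: the Hopf-algebra product naturally produces many parallel contributions to the cuspidal support $k_1\cdot\D_1+\cdots+k_n\cdot\D_n$ on the left Jacquet factor, and pinning down the extremal cuspidals (first the endpoints of $\D_1$, then of $\D_2$, and so on) using \eqref{indexing} has to be carried out carefully rather than waved at. As a back-up, one can fall back on the support-graded bookkeeping of Lemma \ref{same}: decompose the Jacquet module according to the multiset partition $(\text{supp}(\s_1),\dots,\text{supp}(\s_n))$, read off from \eqref{m^*} that the contribution on this block is exactly $\s_1\o\cdots\o\s_n$, and use this to trap any would-be proper direct summand of $\pi$ block by block, much as $\Lambda\not\leq\Lambda_i$ was used in the proof of Lemma \ref{same}.
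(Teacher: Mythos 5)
Your proposal is correct and follows essentially the same route as the paper's proof: group the identical segments, apply Lemma \ref{same} to each block $\s_j$, order the distinct segments to satisfy \eqref{indexing}, and show that $\s_1\o\cdots\o\s_n$ occurs with multiplicity one in the Jacquet module (as in Lemma \ref{regular}), concluding by unitarizability and Frobenius reciprocity. The extra care you flag about the multiplicity-one step when segments repeat is exactly the point the paper handles by the same endpoint/cuspidal-support argument, so no further comparison is needed.
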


\begin{proof} We can write the multiset $(\G_1,\G_2,\dots ,\G_m)$ as
$$
(\underbrace{\D_1,\dots,\D_1}_{k_1-\text{times}}, \underbrace{\D_2,\dots,\D_2}_{k_2-\text{times}}, \dots, \underbrace{\D_n,\dots,\D_n}_{k_n-\text{times}}),
$$
where $\D_1,\dots,\D_n$ are different segments satisfying \eqref{indexing}. Denote
$$
\pi_i=\underbrace{\d(\D_i)\t\dots\t\d(\D_i)}_{k_i-\text{times}}.
$$
Then the above lemma tells us that these representations are irreducible. For the proof of the proposition, it is enough to show that $\pi_1\t\dots\t\pi_n$ is irreducible.

In the same way as in the proof of Lemma \ref{regular} one gets that
%
%
$$
m^*_{\pi_1\o-}(\pi_1\t\pi_2\t\dots\t\pi_n)= \pi_1\o\pi_2\t\dots\t\pi_n.
$$
Continuing this procedure (similarly as in the proof of Lemma \ref{regular}  and using the transitivity of Jacquet modules), we get that the multiplicity of $\pi_1\o\pi_2\o \dots \o \pi_n$ in the Jacquet module of $\pi_1\t\pi_2\t \dots \t \pi_n$ is one. Again the unitarizability of $\pi_1\t\pi_2\t \dots \t \pi_n$ and the Frobenius reciprocity give the irreducibility.
\end{proof}

\begin{remark} 
 Let $\rho$ be an irreducible cuspidal  representation of some $G_n$. 
  In Appendix of \cite{L-M} (Theorem A.1), there is a simple proof  of uniqueness of the reducibility point $s_\rho$ based on Jacquet modules (without using a non-elementary analytic  argument from  \cite{Si-2}). 
  We shall briefly describe idea of that proof (the proof in \cite{L-M} is concise; our brief description of a special case of  that proof is longer than the general proof in \cite{L-M}).
  
  Observe that if we know that there is a reducibility point $s_\rho$ which is strictly positive (for which we do not need to know that it is unique), one  defines $\nu_\rho$ using that $s_\rho$, and then segments of cuspidal representations $\{\rho, \nu_\rho\rho,\dots,\nu_\rho^k\rho\}$. Further, one attaches to such segments essentially
   square integrable representation as before. Now Lemma \ref{same} implies that if there is strictly positive reducibility, $\rho\t\rho$ must be irreducible. Therefore, to prove the uniqueness of $s_\rho$, it is enough to show that one can not have more then one strictly positive reducibility point. 
We shall sketch  how to prove that one can not have reducibility of both $\nu_\rho\rho\t \rho$ and $\nu_\rho^a\rho\t\rho$ for some $a>1$ (the case $a<1$ reduces to this case interchanging the reducibility point in the definition of $\nu_\rho$). To simplify technicalities, we shall give the  argument from \cite{L-M} in the case $a\leq2$ (the idea in general case is the same).

Suppose that both $\nu_\rho\rho\t \rho$ and $\nu_\rho^a\rho\t\rho$ are reducible, where $1<a\leq 2$. 
Then $\nu_\rho\rho\t \rho$ (resp. $\nu_\rho^a\rho\t\rho$) has unique irreducible subrepresentation, and it is essentially square integrable. We denote it by $\d_1$ (resp. $\d_a$). The minimal non-zero (standard) Jacquet module of this subrepresentation is $\nu_\rho\rho\o \rho$ (resp. $\nu_\rho^a\rho\o\rho$).
 Twisting $\rho$ by a character, we can reduce the proof to the case  when $\nu_\rho^a\rho\t\d_1$ has unitary central character (use \eqref{chi}). Observe that $\nu^a_\rho\rho\t\d_1\h
\nu_\rho^a\rho\t\nu_\rho\rho\t\rho$, and that the right hand representation has a unique irreducible subrepresentation (since the induced representation is regular). Denote this irreducible subrepresentation by $\s$. Obviously $\s\leq  \nu^a_\rho\rho\t\d_1$. A simple consideration of Jacquet modules and Frobenius reciprocity imply that there exists a non-trivial intertwining $\d_a\t \nu_\rho\rho\ra \nu_\rho^a\rho\t\nu_\rho\rho\t\rho$ (we are in the regular situation). Thus $\s\leq \d_a\t\nu_\rho \rho$. 

Consider first the case $a=2$. Then  $\d_2$ and $\nu_\rho \rho$ are square integrable (since they have unitary central characters). Therefore, $\d_2\t\nu_\rho \rho$ has no square integrable subrepresentations. From the other side,   $\s=\d([\rho,\nu_\rho^2\rho])\leq \d_2\t\nu_\rho \rho$, which  is square integrable. This contradiction ends the sketch of the proof in this case. 

Consider now the remaining case $a<2$. Denote the minimal non-zero (standard) Jacquet module of $\s$ by $\bold r(\s)$, and denote the Jacquet module of $\s$ with respect to the opposite parabolic subgroup by $\bar{\bold r}(\s)$. The above estimates $\s\leq  \nu^a_\rho\rho\t\d_1$and $\s\leq \d_2\t\nu_\rho \rho$ imply easily\footnote{This upper estimate has more terms in the case $a>2$.}  $\bold r(\s)\leq \nu^a_\rho\rho\o \nu_\rho\rho\o\rho+\nu_\rho\rho\o \nu_\rho^a\rho\o\rho$. Now the Casselman square integrability criterion (see \eqref{=} and \eqref{>}) implies that $\s$ is square integrable, which further implies $\s\cong \s^+$. Now we shall use the fact that  the Jacquet module (with respect to a standard parabolic subgroup) of the  contragredient  representation, is the contragredient of the Jacquet module with respect to the opposite parabolic subgroup of the representation (see Corollary 4.2.5 of \cite{Ca} for precise statement). 
This fact and $\s\cong \s^+$ imply ${\bold r}(\s)\cong {\bold r}(\s^+)\cong (\bar{\bold r}(\s))^+$.

One gets $\bar{\bold r}(\s)$ conjugating ${\bold r}(\s)$ by appropriate element of the Weyl group (which conjugates the standard parabolic subgroup from which $\nu_\rho^a\rho\t\nu_\rho\rho\t\rho$ is induced,  to the opposite one). This and the above estimate of ${\bold r}(\s)$ give $\bar{\bold r}(\s)\leq \rho\o\nu_\rho\rho\o \nu_\rho^a\rho+\rho\o\nu^a_\rho\rho\o \nu_\rho\rho$. Therefore, $(\bar{\bold r}(\s))^+\leq \rho^+\o\nu_\rho^{-1}\rho^+\o \nu_\rho^{-a}\rho^+ +\rho^+\o\nu^{-a}_\rho\rho^+\o \nu_\rho^{-1}\rho^+$.
The fact  ${\bold r}(\s)\cong  (\bar{\bold r}(\s))^+$ gives a new upper bound for ${\bold r}(\s)$ (the same as for $  (\bar{\bold r}(\s))^+$). These two upper bounds of ${\bold r}(\s)$, and the condition  that the central character of  $\nu_\rho^a\rho\t\d_1$ is unitary,  directly imply ${\bold r}(\s)=0$. This contradiction completes the sketch of the proof.
\end{remark}

\section{Irreducibility - the case of two non-linked segments}\label{irr}

\begin{definition}
For a representation $\pi$ of a finite length we denote
$$
m^*_{bottom}(\pi)=\sum
\pi'\o\rho',
$$
where the sum runs over all irreducible $\pi'\o\rho'$ in $m^*(\pi)$ (counted with multiplicities) such that $\rho'$ is cuspidal.
\end{definition}

\begin{lemma}
\label{subset}
Let $\D_1\subseteq 
\D_2$ be two non-empty segments as above. Then
$$
\d(\D_1)\t\d(\D_2)
$$
is irreducible.
\end{lemma}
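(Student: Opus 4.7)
The plan is to proceed by induction on $|\Delta_2|-|\Delta_1|$, using Lemma~\ref{same} as the base case and Jacquet-module bookkeeping via \eqref{m^*} for the inductive step.

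For the base case $\Delta_1=\Delta_2$, the irreducibility of $\delta(\Delta_1)\times\delta(\Delta_1)$ is exactly Lemma~\ref{same}, extended from unitary to arbitrary segments by a character twist via \eqref{chi} (as observed in the footnote of that lemma).

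For the inductive step, assume $\Delta_1\subsetneq\Delta_2$. Passing to Hermitian contragredients if necessary—an operation that preserves both the containment and the irreducibility statement—I may assume $\Delta_2=\Delta_1\cup\Gamma$, where $\Gamma$ is a nonempty segment adjacent to the right end of $\Delta_1$ and disjoint from it. From \eqref{m^*} applied to $\delta(\Delta_2)$ I obtain the embedding $\delta(\Delta_2)\hookrightarrow\delta(\Gamma)\times\delta(\Delta_1)$, hence
\[
\delta(\Delta_1)\times\delta(\Delta_2)\;\hookrightarrow\;\delta(\Delta_1)\times\delta(\Gamma)\times\delta(\Delta_1).
\]
Since $\delta(\Delta_1)\times\delta(\Delta_1)$ is irreducible (base case, applied to a doubled segment), the two outer copies of $\delta(\Delta_1)$ may be thought of as a single block. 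The strategy is then to compute $m^*$ of both sides of the embedding using the Hopf algebra, and to compare the multiplicities of the distinguished terms $\delta(\Delta_1)\otimes\delta(\Delta_2)$ and $\delta(\Delta_2)\otimes\delta(\Delta_1)$. By Frobenius reciprocity, a multiplicity-one identification pins down a unique irreducible subrepresentation; a dual computation (using that $\widetilde{\Delta_1}\subseteq\widetilde{\Delta_2}$) yields a unique irreducible quotient; matching these two forces $\delta(\Delta_1)\times\delta(\Delta_2)$ to be irreducible.

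The main obstacle is the Jacquet-module bookkeeping: the full Hopf-algebra expansion of $m^*(\delta(\Delta_1)\times\delta(\Gamma)\times\delta(\Delta_1))$ contains many terms, and isolating those that can actually contribute to a composition factor of the subrepresentation $\delta(\Delta_1)\times\delta(\Delta_2)$ requires careful use of the containment $\Delta_1\subseteq\Delta_2$ to force cancellations among terms whose cuspidal supports partition incompatibly with the two original segments. A cleaner alternative would be to identify directly the minimal cuspidal Jacquet-module term of $\delta(\Delta_1)\times\delta(\Delta_2)$ (in the spirit of the computation of the minimal Jacquet module of $\delta(\Delta)$ carried out in Section~\ref{notations}), show it appears with multiplicity one, and conclude via Frobenius reciprocity together with a symmetry argument coming from the commutativity of $R$.
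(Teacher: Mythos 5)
There are two genuine gaps here. First, the reduction at the start of your inductive step is not available: if $\Delta_1$ sits strictly in the interior of $\Delta_2$, then $\Delta_2\setminus\Delta_1$ consists of two nonempty segments, one on each side, and passing to Hermitian contragredients merely swaps the two overhangs -- it cannot make one of them disappear. So you cannot assume $\Delta_2=\Delta_1\cup\Gamma$ with $\Gamma$ attached to the right end of $\Delta_1$.

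Second, and more seriously, your concluding mechanism does not close. Knowing that $\delta(\Delta_1)\times\delta(\Delta_2)$ has a unique irreducible subrepresentation and a unique irreducible quotient does not give irreducibility: for \emph{linked} segments the product also has a unique irreducible sub and a unique irreducible quotient, yet it is reducible. What you must show is that the sub and the quotient are isomorphic \emph{and} occur with multiplicity one in the composition series, and "matching these two" is precisely the hard point that your sketch leaves open. The paper handles it as follows: after twisting so that $\Delta_1$ is unitary, the case $\Delta_2$ unitary is settled by Proposition \ref{irr-uni-seg} (there unitarizability supplies semisimplicity, so multiplicity one of a Jacquet-module term plus Frobenius reciprocity suffices -- this is also why your proposed "cleaner alternative" via the minimal cuspidal term cannot work here: without unitarity that argument proves nothing, and in any case the minimal cuspidal term of $\delta(\Delta_1)\times\delta(\Delta_2)$ has multiplicity $>1$ when the supports overlap). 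In the non-unitary case the paper inducts on the total length, uses $m^*_{bottom}$ to bound the length by $2$ and to separate the two putative constituents $\pi_1,\pi_2$ by the last cuspidal factor of their Jacquet modules, and then derives a contradiction by comparing with the auxiliary \emph{irreducible} representation $\pi=\delta(\Delta_1)\times\delta([\nu_\rho^{-n_-}\rho,\nu_\rho^{n_-}\rho])$, which satisfies $\pi\leq\pi_i\times\delta([\nu_\rho^{n_++1}\rho,\nu_\rho^{n_-}\rho])$ for some $i$ but has bottom Jacquet-module terms ending in both $\nu_\rho^{-n}\rho$ and $\nu_\rho^{-n_-}\rho$, whereas each $\pi_i\times\delta([\nu_\rho^{n_++1}\rho,\nu_\rho^{n_-}\rho])$ misses one of these. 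Some device of this kind -- an auxiliary irreducible representation imported from the already-settled unitary case -- is what your outline is missing.
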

 
\begin{proof}
 Twisting with a character, we can reduce the proof of the lemma to the case when $\d(\D_1)$ is unitary.
We shall assume this is the rest of the proof.

First observe that Proposition \ref{irr-uni-seg} implies that  the claim of the above lemma holds in the case that  $\d(\D_2)$ is also unitary.


We shall prove the remaining cases of the lemma by induction with respect to  the sum of lengths of $\D_1$ and $\D_2$.  If the sum of the lengths is 2, then we know that the lemma holds. Therefore, we shall fix two segments whose sum of lengths is at least three, and suppose that the lemma holds for all the pairs of segments whose sum of lengths is strictly smaller then the sum of the lengths of the  segments that we have fixed. 


It is enough to consider the case $\D_1\ne\D_2$. We can write
$$
\D_1=[\nu_\rho^{-n}\rho, \nu_\rho^{n}\rho], \ \ n\in(1/2)\Z, \ \ n
\geq 0,
$$
$$
\D_2=[\nu_\rho^{-n_-}\rho, \nu_\rho^{n_+}\rho], \quad n_-,n_+\in(1/2)\Z, \ \  n_--n_+\in \Z, 
$$
where
$$
 \ \ n-n_-\in \Z,  \ \ n_-,n_+
\geq n.
$$
Since we have seen that the lemma holds in the case $n_-=n_+$, it remains to prove the lemma when $n_+<n_-$ or $n_+>n_-$ We shall now prove the lemma in the case $n_+<n_-$. 
The other case  (i.e. $n_-<n_+$) follows in a similar  way, or one can get it also from the  case $n_+<n_-$ applying the (hermitian) contragredient.

Assume $n_+<n_-$.
Then $n\leq n_+$ implies $n<n_-$.
Suppose that $\d(\D_1)\t\d(\D_2)$ reduces. Observe that
$$
m^*_{bottom}(\d(\D_1)\t\d(\D_2))=
$$
$$
\d([\nu_\rho^{-n}\rho, \nu_\rho^{n}\rho])
\t \d([\nu_\rho^{-n_-+1}\rho, \nu_\rho^{n_+}\rho])\o \nu_\rho^{-n_-}\rho
+
\d([\nu_\rho^{-n+1}\rho, \nu_\rho^{n}\rho])\t \d([\nu_\rho^{-n_-}\rho, \nu_\rho^{n_+}\rho]) \o \nu_\rho^{-n}\rho.
$$
Observe that both representations in this Jacquet module are irreducible by the inductive assumption. Therefore, $\d(\D_1)\t\d(\D_2)$ is a length two representation. Write it in the Grothendieck group as a sum of irreducible representations
\begin{equation}
\label{sum}
\d(\D_1)\t\d(\D_2)=\pi_1+\pi_2.
\end{equation}
Then with a suitable choice of indexes we have 
\begin{equation}
\label{B1}
m^*_{bottom}(\pi_1)=\d([\nu_\rho^{-n}\rho, \nu_\rho^{n}\rho])
\t \d([\nu_\rho^{-n_-+1}\rho, \nu_\rho^{n_+}\rho])\o \nu_\rho^{-n_-}\rho,
\end{equation}
\begin{equation}
\label{B2}
m^*_{bottom}(\pi_2)=\d([\nu_\rho^{-n+1}\rho, \nu_\rho^{n}\rho])\t \d([\nu_\rho^{-n_-}\rho, \nu_\rho^{n_+}\rho]) \o \nu_\rho^{-n}\rho.
\end{equation}
Consider now the representation
$$
\pi:=\d([\nu_\rho^{-n}\rho, \nu_\rho^{n}\rho])
\t \d([\nu_\rho^{-n_-}\rho, \nu_\rho^{n_-}\rho]).
$$
We know by the first part of the proof that this representation is irreducible (since $n\ne n_-$).
Observe that the formula \eqref{m^*} implies
$$
\d([\nu_\rho^{-n_-}\rho, \nu_\rho^{n_-}\rho])
\h \d([\nu_\rho^{n_++1}\rho, \nu_\rho^{n_-}\rho])\t \d([\nu_\rho^{-n_-}\rho, \nu_\rho^{n_+}\rho]).
$$
This implies
$
\pi\h
$
$$
\d([\nu_\rho^{-n}\rho, \nu_\rho^{n}\rho])\t
\d([\nu_\rho^{n_++1}\rho, \nu_\rho^{n_-}\rho])\t \d([\nu_\rho^{-n_-}\rho, \nu_\rho^{n_+}\rho])
=\d(\D_1)\t\d([\nu_\rho^{n_++1}\rho,\nu_\rho^{n_-}\rho])\t\d(\D_2).
$$
This implies that in $R$ we have
$$
\pi \leq \d(\D_1)\t\d(\D_2) \t \d([\nu_\rho^{n_++1}\rho,\nu_\rho^{n_-}\rho]).
$$
Since $\pi$ is irreducible, \eqref{sum} implies  
$$
\pi\leq \pi_1\t \d([\nu_\rho^{n_++1}\rho, \nu_\rho^{n_-}\rho])
$$
or
$$
\pi\leq \pi_2\t \d([\nu_\rho^{n_++1}\rho, \nu_\rho^{n_-}\rho]).
$$
We shall now show that neither of these two possibilities can happen, which will complete the proof.

Suppose that the first inequality holds, 
 i.e. $ \pi\leq \pi_1\t \d([\nu_\rho^{n_++1}\rho, \nu_\rho^{n_-}\rho])$. 
Now \eqref{B1} 
implies 
$$
m^*_{bottom}(\pi_1\t \d([\nu_\rho^{n_++1}\rho, \nu_\rho^{n_-}\rho]))=
\pi_1\t \d([\nu_\rho^{n_++2}\rho, \nu_\rho^{n_-}\rho])\o \nu_\rho^{n_++1}\rho
$$
$$
+\ \d([\nu_\rho^{-n}\rho, \nu_\rho^{n}\rho])
\t \d([\nu_\rho^{-n_-+1}\rho, \nu_\rho^{n_+}\rho])
\t \d([\nu_\rho^{n_++1}\rho, \nu_\rho^{n_-}\rho])\o \nu_\rho^{-n_-}\rho.
$$

Obviously, in the Jacquet module of $\pi_1\t \d([\nu_\rho^{n_++1}\rho, \nu_\rho^{n_-}\rho])$ 
we shall never have the term which finishes with $\dots\o\nu_\rho^{-n}\rho$. Since $\pi$ has at least one such term in its Jacquet module, we have got a contradiction. Therefore, the first inequality can not happen.

Therefore,  the second inequality holds, i.e. $ \pi\leq \pi_2\t \d([\nu_\rho^{n_++1}\rho, \nu_\rho^{n_-}\rho])$. 
Now \eqref{B2} implies
$$
m^*_{bottom}(\pi_2 \t \d([\nu_\rho^{n_++1}\rho, \nu_\rho^{n_-}\rho]))=
\pi_2\t \d([\nu_\rho^{n_++2}\rho, \nu_\rho^{n_-}\rho])\o \nu_\rho^{n_++1}\rho 
$$
$$
+\  \d([\nu_\rho^{-n+1}\rho, \nu_\rho^{n}\rho])\t \d([\nu_\rho^{-n_-}\rho, \nu_\rho^{n_+}\rho]) \t \d([\nu_\rho^{n_++1}\rho, \nu_\rho^{n_-}\rho])\o \nu_\rho^{-n}\rho.
$$

Obviously, in the Jacquet module of $\pi_2\t \d([\nu_\rho^{n_++1}\rho, \nu_\rho^{n_-}\rho])$ 
we shall never have the term which finishes with $\dots\o\nu_\rho^{-n_-}\rho$. Since $\pi$ has at least one such term in its Jacquet module, we have got again a contradiction.
This completes the proof.
\end{proof}

We say two segments $\D_1$ and $\D_2$ are linked, if $\D_1\cup \D_2$ is a segment which is different from both $\D_1$ and $\D_2$. For a segment $\D$, we shall denote by 
$$
b(\D)
$$
 its starting representation. We further denote 
$$
^-\D=\D\backslash b(\D).
$$

\begin{proposition}
\label{link}
Suppose that (non-empty) segments $\D_1$ and $\D_2$ are not linked. Then
$$
\d(\D_1)\t\d(\D_2)
$$
is irreducible.
\end{proposition}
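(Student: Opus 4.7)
The plan is to induct on the sum of the lengths of $\D_1$ and $\D_2$, using Lemma \ref{subset} both to dispose of the nested case and to furnish auxiliary irreducible representations when needed. By Lemma \ref{subset}, we may assume that neither segment is contained in the other, so the unlinked hypothesis forces $\D_1\cap\D_2=\emptyset$. Two sub-cases then remain: either $\D_1$ and $\D_2$ lie on different cuspidal lines (no element of $\D_1$ is isomorphic to $\nu_\rho^k$ times an element of $\D_2$ for any $k$), or they lie on the same cuspidal line but are separated by a gap. The base case (both $\D_i$ singletons) is immediate from the uniqueness of $s_\rho$: unlinked disjoint cuspidals $\rho_1,\rho_2$ satisfy $\rho_2\not\cong\nu_\rho^{\pm 1}\rho_1$, so $\rho_1\t\rho_2$ is irreducible.

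For the inductive step, the central computation is
$$
m^*_{bottom}(\d(\D_1)\t\d(\D_2))\ =\ \d({}^-\D_1)\t\d(\D_2)\o b(\D_1)\ +\ \d(\D_1)\t\d({}^-\D_2)\o b(\D_2),
$$
obtained from the Hopf algebra formula \eqref{m^*}. Since $\D_1\cap\D_2=\emptyset$ gives $b(\D_1)\ne b(\D_2)$, and since both $({}^-\D_1,\D_2)$ and $(\D_1,{}^-\D_2)$ remain unlinked pairs of strictly smaller total length, the two summands are irreducible and distinct by the inductive hypothesis. Hence $\d(\D_1)\t\d(\D_2)$ has length at most two, and if it were reducible we could write $\d(\D_1)\t\d(\D_2)=\pi_1+\pi_2$ in the Grothendieck group with the two bottom Jacquet terms distributed one to each component.

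The contradiction step mirrors the final calculation in the proof of Lemma \ref{subset}. In the same-line gap case, let $\D_{gap}$ be the segment filling the gap, so that $\D:=\D_1\cup\D_{gap}\cup\D_2$ is a segment containing $\D_1$; Lemma \ref{subset} then makes $\tilde\pi:=\d(\D)\t\d(\D_1)$ irreducible. From the embedding $\d(\D)\h \d(\D_2)\t\d(\D_{gap})\t\d(\D_1)$ coming from \eqref{m^*}, we deduce
$$
\tilde\pi\ \leq\ \d(\D_1)\t\d(\D_2)\t\d(\D_{gap})\t\d(\D_1)\ =\ (\pi_1+\pi_2)\t\d(\D_{gap})\t\d(\D_1),
$$
so $\tilde\pi\leq \pi_i\t\d(\D_{gap})\t\d(\D_1)$ for some $i\in\{1,2\}$. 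Computing $m^*_{bottom}$ of both sides and inspecting which cuspidal tails can appear yields the contradiction, exactly as in the concluding computation of Lemma \ref{subset}: whichever $i$ we choose, the cuspidal tail $b(\D_{3-i})$ demanded by $\tilde\pi$ cannot occur on the right hand side. In the different-cuspidal-line case, an analogous argument applies with $\D$ replaced by a suitable extension of $\D_2$ within its own cuspidal line, again chosen so that Lemma \ref{subset} supplies the auxiliary irreducible $\tilde\pi$.

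The principal obstacle is the bookkeeping at the contradiction step: one must track precisely which cuspidals $b(\D_1)$, $b(\D_2)$, or $b(\D_{gap})$ can appear as the rightmost tensor factor in $m^*_{bottom}(\pi_i\t\d(\D_{gap})\t\d(\D_1))$, and verify that neither choice of $i$ accommodates the tail forced by $\tilde\pi$. The distinctness $b(\D_1)\ne b(\D_2)$ (guaranteed by $\D_1\cap\D_2=\emptyset$) and the explicit formula \eqref{m^*} drive the argument; the main subtlety is that the different-line sub-case requires choosing the enveloping segment $\D$ on the correct cuspidal line so that Lemma \ref{subset} remains applicable to the auxiliary product.
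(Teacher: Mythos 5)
Your overall architecture coincides with the paper's: the reduction to $\D_1\cap\D_2=\emptyset$ via Lemma \ref{subset}, the induction on the total length, the computation of $m^*_{bottom}(\d(\D_1)\t\d(\D_2))$, and the conclusion that a putative reducible $\d(\D_1)\t\d(\D_2)=\pi_1+\pi_2$ has length two with the two bottom terms split one to each constituent. The gap is in your contradiction step, which does not work as described. Write $\D_1=[\nu_\rho^{a}\rho,\nu_\rho^{b}\rho]$, $\D_2=[\nu_\rho^{c}\rho,\nu_\rho^{d}\rho]$ with $b+1<c$, and $\D=\D_1\cup\D_{gap}\cup\D_2$. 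Your auxiliary representation $\tilde\pi=\d(\D)\t\d(\D_1)$ satisfies $b(\D)=b(\D_1)$, so \emph{every} term of $m^*_{bottom}(\tilde\pi)$ ends in $b(\D_1)$; the tail $b(\D_2)$ is not demanded by $\tilde\pi$ at all. Moreover the completion factor $\d(\D_{gap})\t\d(\D_1)$ on the right-hand side supplies the tail $b(\D_1)$ regardless of $i$, so $m^*_{bottom}\bigl(\pi_i\t\d(\D_{gap})\t\d(\D_1)\bigr)$ contains terms ending in $b(\D_1)$ for both $i=1$ and $i=2$, and no contradiction arises from inspecting cuspidal tails. Taking $\tilde\pi=\d(\D)\t\d(\D_2)$ instead gives an auxiliary with both tails, but then the completion factor reintroduces $b(\D_2)$, and again neither value of $i$ is excluded. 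The feature that makes this step work inside the proof of Lemma \ref{subset} --- a completion factor consisting of a single segment whose beginning is distinct from both $b(\D_1)$ and $b(\D_2)$ --- is not available here. Finally, in the different-cuspidal-line sub-case there is no segment containing both $\D_1$ and $\D_2$, so Lemma \ref{subset} cannot supply any auxiliary irreducible product of the kind you propose; that sub-case is left entirely open.

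The paper closes the argument with a different device, and you need some such input. In $m^*(\d(\D_1)\t\d(\D_2))$ there is an irreducible subquotient of the form $\pi\o\bigl(b(\D_1)\t b(\D_2)\bigr)$, and $b(\D_1)\t b(\D_2)$ is irreducible precisely because $\D_1$ and $\D_2$ are disjoint and not linked (this covers the different-line case uniformly, with no auxiliary segment needed). The unique irreducible subquotient $\tau$ of $\d(\D_1)\t\d(\D_2)$ possessing this term satisfies $\tau\leq\pi_1$ or $\tau\leq\pi_2$; but by transitivity of Jacquet modules $\tau$ has terms ending in $b(\D_1)$ \emph{and} terms ending in $b(\D_2)$, contradicting \eqref{jme1} and \eqref{jme2}, each of which permits only one tail. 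You should replace your auxiliary-segment step with this (or an equivalent) argument.
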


\begin{proof}
Thanks to the previous lemma, it is enough to prove the proposition in the case $
\D_1\cap\D_2=\emptyset$. We shall assume this in  the proof. We shall proceed by induction on the sum of the lengths of $\D_1$ and $\D_2$. For the sum equal to two, the proposition obviously holds. We shall fix two segments as in the proposition, with the sum $\geq 3$, and suppose that the proposition holds for strictly smaller sums.
Suppose that the proposition does not hold for these $
\D_1$ and $\D_2$. From
$$
m^*_{bottom}(\d(\D_1)\t\d(\D_2))=\d(^-\D_1)\t\d(\D_2)\o b(\D_1)
+
\d(\D_1)\t\d(^-\D_2)\o b(\D_2)
$$
and the inductive assumption we conclude that the induced representation is of length 2. Denote the irreducible sub quotients by $\pi_1$ and $\pi_2$. After possible renumeration, we have
\begin{equation}
\label{jme1}
m^*_{bottom}(\pi_1)=\d(^-\D_1)\t\d(\D_2)\o b(\D_1),
\end{equation}
\begin{equation}
\label{jme2}
m^*_{bottom}(\pi_2)=\d(\D_1)\t\d(^-\D_2)\o b(\D_2).
\end{equation}
From the other side, in the Jacquet module of the induced representation we have an irreducible subquotient of the form
\begin{equation}
\label{sq}
\pi\o b(\D_1)\t b(\D_2).
\end{equation}
The assumption on $\D_1$ and $\D_2$ implies that the representation on the right hand side of the tensor product is irreducible. Denote the unique irreducible sub quotient of $\d(\D_1)\t\d(\D_2)$ which has \eqref{sq} in its Jacquet module by $\tau$. Then $\tau\leq \pi_1$ or $\tau\leq \pi_2$. Since $\tau$ has terms of the form $\dots\o b(\D_1)$ and $\dots\o b(\D_2)$ in its Jacquet module, we get contradiction with formula \eqref{jme1} and \eqref{jme2}.
 This contradiction completes the proof.
\end{proof}

We know that the representation $\rho\t\nu_\rho\rho$ reduces. One directly sees that this is a multiplicity one representation of length two. One irreducible subquotient is $\d([\rho,\nu_\rho\rho])$. Denote the other irreducible subquotient by 
$$
\mathfrak z([\rho,\nu_\rho\rho]).
$$
 
 For the classification of square integrable representations we shall need the following simple
\begin{lemma}
\label{small}
The representation $\rho\t\mathfrak z([\rho,\nu_\rho\rho])$ is irreducible\footnote{This follows directly applying the involution of Zelevinsky type which preserves the irreducibility (which is proved by A.-M . Aubert, and by P. Schneider and U. Stuhler).
Nevertheless,  we prefer not to use this very powerful result to prove this simple lemma.}.
\end{lemma}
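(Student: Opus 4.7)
The plan is to argue by contradiction. Supposing $\rho\t\z([\rho,\nu_\rho\rho])$ reduces, I will extract an irreducible constituent whose Jacquet modules force both an isomorphism with $\rho\t\d([\rho,\nu_\rho\rho])$ and a contradiction with the Jacquet modules of that very representation. Throughout let $n$ denote the size of $\rho$, so that $\rho\t\z([\rho,\nu_\rho\rho])$ is a representation of $G_{3n}$.

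As a preliminary step, from $m^*(\rho\t\nu_\rho\rho)=m^*(\rho)\cdot m^*(\nu_\rho\rho)$ and the expression \eqref{m^*} for $m^*(\d([\rho,\nu_\rho\rho]))$, subtraction yields
$$
m^*(\z([\rho,\nu_\rho\rho])) = \z([\rho,\nu_\rho\rho])\o 1 + \rho\o\nu_\rho\rho + 1\o\z([\rho,\nu_\rho\rho]).
$$
Multiplying with $m^*(\rho)$ in $R$, the part of $m^*(\rho\t\z([\rho,\nu_\rho\rho]))$ in $R(G_{2n})\o R(G_n)$ equals
$$
(\rho\t\rho)\o\nu_\rho\rho \ + \ \z([\rho,\nu_\rho\rho])\o\rho,
$$
a sum of two non-isomorphic irreducibles (note that $\rho\t\rho$ is irreducible by Lemma \ref{same}). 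Every irreducible subquotient of $\rho\t\z([\rho,\nu_\rho\rho])$ has cuspidal support $\{\rho,\rho,\nu_\rho\rho\}$, is therefore non-cuspidal, and by transitivity of Jacquet modules contributes a non-zero amount to this intermediate Jacquet module. Consequently the length of $\rho\t\z([\rho,\nu_\rho\rho])$ is at most two.

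Suppose for contradiction the length equals two, and write $\rho\t\z([\rho,\nu_\rho\rho])=\pi_1+\pi_2$ in the Grothendieck group with the $R(G_{2n})\o R(G_n)$-part of $m^*(\pi_2)$ equal to $\z([\rho,\nu_\rho\rho])\o\rho$. Applying $m^*$ to the first tensor factor, the minimal Jacquet module of $\pi_2$ is $\rho\o\nu_\rho\rho\o\rho$. The irreducibles of cuspidal support $\{\rho,\rho,\nu_\rho\rho\}$ that can occur in the $R(G_n)\o R(G_{2n})$-part are $\rho\o\d([\rho,\nu_\rho\rho])$, $\rho\o\z([\rho,\nu_\rho\rho])$ and $\nu_\rho\rho\o(\rho\t\rho)$; matching these against the minimal Jacquet module just determined, only $\rho\o\d([\rho,\nu_\rho\rho])$ can appear (with multiplicity one) in the $R(G_n)\o R(G_{2n})$-part of $m^*(\pi_2)$. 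Frobenius reciprocity then provides an embedding $\pi_2\h\rho\t\d([\rho,\nu_\rho\rho])$, and since the latter is irreducible by Lemma \ref{subset} (applied to $[\rho,\rho]\subseteq[\rho,\nu_\rho\rho]$), we obtain $\pi_2\cong\rho\t\d([\rho,\nu_\rho\rho])$. But a direct Hopf-algebra computation of $m^*(\rho\t\d([\rho,\nu_\rho\rho]))$ gives its $R(G_n)\o R(G_{2n})$-part as $\nu_\rho\rho\o(\rho\t\rho)+\rho\o\d([\rho,\nu_\rho\rho])$, contradicting the value derived above for $m^*(\pi_2)$. Therefore the length is one, i.e.\ $\rho\t\z([\rho,\nu_\rho\rho])$ is irreducible. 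The main technical step to execute carefully is the enumeration at the $R(G_n)\o R(G_{2n})$-level matched against the minimal Jacquet module via transitivity.
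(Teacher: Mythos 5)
Your proof is correct and follows essentially the same route as the paper: isolate the putative constituent whose $(2n,n)$-level Jacquet module is $\z([\rho,\nu_\rho\rho])\o\rho$, use transitivity to pin down its minimal Jacquet module, force an embedding into $\rho\t\d([\rho,\nu_\rho\rho])$ (irreducible by Lemma \ref{subset}), and contradict by comparing Jacquet module lengths. The only cosmetic differences are that you make the ``length at most two'' step explicit and derive the final contradiction at the $(n,2n)$-level rather than at the minimal level.
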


\begin{proof}
One directly computes
$$
m^*(\rho\t\mathfrak z([\rho,\nu_\rho\rho]))=1\o \rho\t\mathfrak z([\rho,\nu_\rho\rho])
\hskip80mm
$$
\begin{equation}
\label{line1}
+
\rho\o\mathfrak z([\rho,\nu_\rho\rho]) 
+
\rho\o\rho\t\nu_\rho\rho
\hskip20mm
\end{equation}
\begin{equation}
\label{line2}
\hskip20mm
+\mathfrak z([\rho,\nu_\rho\rho])\o\rho+
\rho\t\rho\o\nu_\rho\rho
\end{equation}
$$
\hskip60mm
+
\rho\t\mathfrak z([\rho,\nu_\rho\rho])\o1.
$$
Suppose that $\rho\t\mathfrak z([\rho,\nu_\rho\rho])$ is reducible. Then from \eqref{line2} we see that there must be a subquotient $\pi$ of $\rho\t\mathfrak z([\rho,\nu_\rho\rho])$ which has 
$\mathfrak z([\rho,\nu_\rho\rho])\o\rho$ in its Jacquet module, and this is the whole Jacquet module  with respect to the corresponding parabolic subgroup which gives this Jacquet module (since the Jacquet module of $\rho\t\mathfrak z([\rho,\nu_\rho\rho])$ for that parabolic subgroup has length two). Now using the transitivity of Jacquet modules,  we get from \eqref{line1} that in the Jacquet module $\pi$ must be 
$\rho\o\d([\rho,\nu_\rho\rho])$, and this is the whole Jacquet module for the corresponding parabolic subgroup which gives this Jacquet module. From this, the Frobenius reciprocity implies
$$
\pi\h \rho\t\d([\rho,\nu_\rho\rho]).
$$
Now Lemma \eqref{subset} implies $\pi\cong \rho\t\d([\rho,\nu_\rho\rho])$. This is a contradiction, since the length of the minimal non-zero Jacquet module of $\pi$ is one, while the length  of the minimal non-zero Jacquet module of $\rho\t\d([\rho,\nu_\rho\rho])$ is three.
\end{proof}

\section{On Jacquet modules}

\begin{lemma}
\label{t1}
Let $\pi$ be an irreducible representation which has in its Jacquet module an irreducible subquotient
$$
\a_1\o\dots\o\a_n\o(\nu_\rho^{i+1}\rho\o\nu_\rho^i\rho\o\nu_\rho^i\rho)\o \b_1\o\dots\o\b_m,
$$
where $\rho$ is a cuspidal representation of some $GL(j,A)$.
 Then it has also
$$
\a_1\o\dots\o\a_n\o(\nu_\rho^{i}\rho\o\nu_\rho^{i+1}\rho\o\nu_\rho^i\rho)\o \b_1\o\dots\o\b_m
$$
in its Jacquet module.
\end{lemma}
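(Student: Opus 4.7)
Set $\s_1 = \nu_\rho^i\rho$ and $\s_2 = \nu_\rho^{i+1}\rho$, and let $j$ be the rank of $\rho$. The plan is to use transitivity of the Jacquet functor to lift the swap to a question inside $GL(3j, A)$, and then to exhaust the irreducible possibilities there using the irreducibility results already proved.

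First I would pass to the intermediate Levi $M_1$ obtained from the Levi $M_0$ carrying the given subquotient by merging the three middle $GL(j,A)$ blocks into a single $GL(3j,A)$ block. The outer factors $\a_i$ and $\b_k$ are untouched by the further Jacquet step from $M_1$ to $M_0$, so by transitivity of Jacquet modules there must exist an irreducible subquotient of $J_{M_1}(\pi)$ of the form $\a_1 \o \dots \o \a_n \o \tau \o \b_1 \o \dots \o \b_m$, where $\tau$ is an irreducible representation of $GL(3j,A)$ whose Jacquet module to the standard Borel of $GL(3j,A)$ contains $\s_2 \o \s_1 \o \s_1$.

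Next I would identify the possibilities for $\tau$. Its cuspidal support is the multiset $\{\s_1,\s_1,\s_2\}$, so $\tau$ is a composition factor of $\s_2 \t \s_1 \t \s_1$. Using commutativity of $R$, this equals $\s_1 \t (\s_1 \t \s_2) = \s_1 \t \d([\s_1,\s_2]) + \s_1 \t \mathfrak z([\s_1,\s_2])$ in $R$. Both summands on the right are irreducible: the first by Lemma \ref{subset} applied to the nested segments $\{\s_1\} \subseteq [\s_1,\s_2]$, and the second by Lemma \ref{small} applied with $\rho$ replaced by $\s_1$ (noting $\nu_{\s_1} = \nu_\rho$ by the discussion around \eqref{chi}). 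Hence $\tau$ is isomorphic to one of these two representations.

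Finally I would show that each candidate carries $\s_1 \o \s_2 \o \s_1$ in its minimal Jacquet module. For $\s_1 \t \d([\s_1,\s_2])$ the embedding $\d([\s_1,\s_2]) \h \s_2 \t \s_1$ gives $\s_1 \t \d([\s_1,\s_2]) \h \s_1 \t \s_2 \t \s_1$, and Frobenius reciprocity produces the required tensor. For $\s_1 \t \mathfrak z([\s_1,\s_2])$ I would use that $\mathfrak z([\s_1,\s_2])$ is characterized as the irreducible subquotient of $\s_1 \t \s_2$ with minimal Jacquet module $\s_1 \o \s_2$, which forces $\mathfrak z([\s_1,\s_2]) \h \s_1 \t \s_2$; combined with $\s_1 \t \mathfrak z([\s_1,\s_2]) \cong \mathfrak z([\s_1,\s_2]) \t \s_1$ (both sides are irreducible and agree in $R$), this again embeds the candidate in $\s_1 \t \s_2 \t \s_1$ and yields the same Frobenius conclusion. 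One more application of transitivity of Jacquet modules then delivers the desired subquotient of $J_{M_0}(\pi)$. The hard part will be the uniformity: since we cannot pin down which of the two candidates $\tau$ really is, the argument must treat both simultaneously, and it is precisely this that makes Lemmas \ref{subset} and \ref{small} both essential to the proof.
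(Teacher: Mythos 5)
Your proof is correct and follows the same overall strategy as the paper: reduce by transitivity of Jacquet modules to an irreducible representation of $GL(3j,A)$ with cuspidal support $\{\nu_\rho^i\rho,\nu_\rho^i\rho,\nu_\rho^{i+1}\rho\}$, decompose the cuspidally induced representation into irreducible constituents, and conclude by Frobenius reciprocity. The one place you diverge is the identification step. The paper pins down $\pi'$ uniquely by a multiplicity count: $\nu_\rho\rho\o\rho\o\rho$ occurs with multiplicity $2$ in the minimal Jacquet module of $\nu_\rho\rho\t\rho\t\rho$, and already with multiplicity $2$ in that of $\rho\t\d([\rho,\nu_\rho\rho])$, so $\pi'\cong\rho\t\d([\rho,\nu_\rho\rho])$ and Lemma \ref{small} is not needed at all. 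Your closing worry that one cannot pin down which of the two candidates $\tau$ really is, is therefore unfounded: the minimal Jacquet module of $\s_1\t\mathfrak z([\s_1,\s_2])$ is $2(\s_1\o\s_1\o\s_2)+\s_1\o\s_2\o\s_1$, which does not contain $\s_2\o\s_1\o\s_1$, so that candidate is in fact excluded. Your workaround --- verifying that both constituents embed in $\s_1\t\s_2\t\s_1$ and hence both carry $\s_1\o\s_2\o\s_1$ in their Jacquet modules --- is nonetheless valid; it simply buys you the conclusion without computing multiplicities, at the cost of invoking Lemma \ref{small} (which is available at this point, so there is no circularity). Either route is acceptable; the paper's is marginally more economical in its inputs.
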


\begin{proof} By the transitivity of the Jacquet modules, we can reduce this question to a question of irreducible representation $\pi'$ of $GL(3j,A)$ 
which has in its Jacquet module $\rho^{i+1}\rho\o\nu_\rho^i\rho\o\nu_\rho^i\rho$. Further, changing notation, we can take that $i=0$.

First observation is that $\pi'$ is a subquotient of  $\nu_\rho\rho\t\rho\t\rho$. 
Using the structure of the Hopf algebra, one directly sees that the multiplicity of  $\nu_\rho\rho\o\rho\o\rho$ in the Jacquet module of the above representation is 2. By Lemma \ref{subset}, $\rho\t\d([\rho,\nu_\rho\rho])$ is irreducible (which implies $\rho\t\d([\rho,\nu_\rho\rho])\cong
 \d([\rho,\nu_\rho\rho])\t\rho$).  Further, this representation 
is also subquotient of  $\nu_\rho\rho\t\rho\t\rho$. The multiplicity of $\nu_\rho\rho\o\rho\o\rho$ in the Jacquet module of $\rho\t\d([\rho,\nu_\rho\rho])$ is two. This implies $\pi'\cong \rho\t\d([\rho,\nu_\rho\rho])$. Now 
 the Frobenius reciprocity implies that 
$
\rho\o\nu_\rho\rho\o\rho
$
must be in the Jacquet module of $\pi'$. This implies
 the lemma.
\end{proof}

\begin{lemma}
\label{t2}
Let $\pi$ be an irreducible representation which has in its Jacquet module an irreducible subquotient
$$
\a_1\o\dots\o\a_n\o(\nu_\rho^{i+1}\rho\o\nu_\rho^i\rho\o\nu_\rho^{i+1}\rho)\o \b_1\o\dots\o\b_m,
$$
where $\rho$ is a cuspidal representation of some $GL(j,A)$.
 Then it has 
$$
\a_1\o\dots\o\a_n\o(\nu_\rho^{i+1}\rho\o\nu_\rho^{i+1}\rho\o\nu_\rho^i\rho)\o \b_1\o\dots\o\b_m
$$
or
$$
\a_1\o\dots\o\a_n\o(\nu_\rho^{i}\rho\o\nu_\rho^{i+1}\rho\o\nu_\rho^{i+1}\rho )\o\b_1\o\dots\o\b_m
$$
in its Jacquet module.

\end{lemma}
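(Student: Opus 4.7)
The strategy mirrors the proof of Lemma \ref{t1}, with an extra case-split reflecting the two alternatives in the conclusion. By the transitivity of Jacquet modules and the twisting identity \eqref{chi}, one reduces (exactly as in Lemma \ref{t1}) to the assertion that any irreducible representation $\pi'$ of $GL(3j,A)$ whose Jacquet module contains $\nu_\rho\rho\o\rho\o\nu_\rho\rho$ has $\nu_\rho\rho\o\nu_\rho\rho\o\rho$ or $\rho\o\nu_\rho\rho\o\nu_\rho\rho$ in its Jacquet module. By Frobenius reciprocity $\pi' \h \pi_0 := \nu_\rho\rho\t\rho\t\nu_\rho\rho$.

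The plan is now to decompose $\rho\t\nu_\rho\rho = \d([\rho,\nu_\rho\rho]) + \mathfrak{z}([\rho,\nu_\rho\rho])$ in the Grothendieck group, so that $\pi_0 = X + Y$ with $X := \nu_\rho\rho\t\d([\rho,\nu_\rho\rho])$ and $Y := \nu_\rho\rho\t\mathfrak{z}([\rho,\nu_\rho\rho])$. By Lemma \ref{subset}, $X$ is irreducible. Using the multiplicativity of $m^*$, together with \eqref{m^*} and the straightforward computation $m^*(\mathfrak{z}([\rho,\nu_\rho\rho])) = 1\o\mathfrak{z}([\rho,\nu_\rho\rho]) + \rho\o\nu_\rho\rho + \mathfrak{z}([\rho,\nu_\rho\rho])\o 1$, one verifies
$$
\text{full JM}(X) = 2(\nu_\rho\rho\o\nu_\rho\rho\o\rho) + \nu_\rho\rho\o\rho\o\nu_\rho\rho,
$$
$$
\text{full JM}(Y) = \nu_\rho\rho\o\rho\o\nu_\rho\rho + 2(\rho\o\nu_\rho\rho\o\nu_\rho\rho).
$$
Comparing the coefficient of $\nu_\rho\rho\o\nu_\rho\rho\o\rho$ in $\pi_0$ (which equals $2$) and in $X$ (also $2$), one concludes that $X$ occurs with multiplicity exactly one in the Grothendieck decomposition of $\pi_0$, so every composition factor of $\pi_0$ not isomorphic to $X$ is a composition factor of $Y$.

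If $\pi' \cong X$, then $\nu_\rho\rho\o\nu_\rho\rho\o\rho$ lies in the Jacquet module of $\pi'$ and the first option holds. Otherwise $\pi'$ is a composition factor of $Y$, so $\text{full JM}(\pi') \leq \text{full JM}(Y)$. Suppose for contradiction that $\rho\o\nu_\rho\rho\o\nu_\rho\rho$ is absent from the Jacquet module of $\pi'$; then $\text{full JM}(\pi')$ equals a single copy of $\nu_\rho\rho\o\rho\o\nu_\rho\rho$. By transitivity, the Jacquet module of $\pi'$ to the standard parabolic of type $(2j,j)$ then has to equal $\d([\rho,\nu_\rho\rho])\o\nu_\rho\rho$, since this is the unique irreducible representation of $GL(2j,A)\t GL(j,A)$ whose further Jacquet module is $\nu_\rho\rho\o\rho\o\nu_\rho\rho$ with multiplicity one. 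Frobenius reciprocity then forces $\pi' \h \d([\rho,\nu_\rho\rho])\t\nu_\rho\rho = X$, giving $\pi' \cong X$, a contradiction. Hence the second option holds.

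The principal difficulty is the Jacquet module bookkeeping---in particular verifying that $\nu_\rho\rho\o\nu_\rho\rho\o\rho$ does not appear in $\text{full JM}(Y)$ and that $X$ has multiplicity exactly one in $\pi_0$; once these are in place, the remainder is a routine application of Frobenius reciprocity and transitivity as in Lemma \ref{t1}.
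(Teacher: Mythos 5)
Your proof is correct, and it differs from the paper's in one substantive way. The paper invokes Lemma \ref{small} (via contragredient and twist) to conclude that $Y=\z([\rho,\nu_\rho\rho])\t\nu_\rho\rho$ is irreducible; hence $\nu_\rho\rho\t\rho\t\nu_\rho\rho$ has length exactly two, $\pi'$ must be one of the two explicitly known factors $X$, $Y$, and the lemma is read off from their cuspidal Jacquet modules, with exponents $(1,0,1)+2(1,1,0)$ and $(1,0,1)+2(0,1,1)$ respectively --- the same shuffle computation you perform. You never need $Y$ to be irreducible: in the case $\pi'\not\cong X$ you instead show that if the second alternative failed, the $(2j,j)$ Jacquet module of $\pi'$ would have to be exactly $\d([\rho,\nu_\rho\rho])\o\nu_\rho\rho$, and Frobenius reciprocity would force $\pi'\h X$, a contradiction. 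Since Lemma \ref{small} is used in the paper only in the proof of Lemma \ref{t2}, your argument would make that lemma and its proof dispensable, at the cost of one extra Frobenius-reciprocity step; the paper's version is shorter once Lemma \ref{small} is in hand. Two small points: the assertion ``by Frobenius reciprocity $\pi'\h\pi_0$'' is an overstatement --- having $\nu_\rho\rho\o\rho\o\nu_\rho\rho$ as a subquotient of the Jacquet module only makes $\pi'$ a subquotient of $\pi_0$, which is all your argument actually uses; and the multiplicity count showing $X$ occurs exactly once in $\pi_0$ is already immediate from the Grothendieck-group identity $\pi_0=X+Y$, since $\rho\t\nu_\rho\rho$ is multiplicity free of length two.
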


\begin{proof}
Again by the transitivity of the  Jacquet modules, we can reduce this question to a question about irreducible representation $\pi'$ of $GL(3j,A)$ 
which has in its Jacquet module $\rho^{i+1}\rho\o\nu_\rho^i\rho\o\nu_\rho^{i+1}\rho$. Further, changing notation, we can take again that $i=0$.

From Lemma \ref{small} we know that
 $\mathfrak z([\rho,\nu_\rho\rho])\t
\nu_\rho\rho$ is an irreducible representation.
This directly implies that
$\rho\t\nu_\rho\rho\t\nu_\rho\rho$ is a representation of length two. Observe that $\pi'$ must be one of these two irreducible subquotients.
One writes explicitly the composition series of minimal non-zero Jacquet modules of these two irreducible subquotients. Exponents of the subquotients of the representation $\d([\rho,\nu_\rho\rho])\t
\nu_\rho\rho$  are
$$
(1,0,1), (1,1,0), (1,1,0),
$$
and for the other one are
$$
(0,1,1), (0,1,1), (1,0,1).
$$
This implies that there must be also exponents $(1,1,0)$ or $(0,1,1)$. From this follows directly the lemma.

\end{proof}

\begin{remark} Let $\pi$ be an irreducible representation of some $GL(l,A)$ and let 
$$
\rho_1\o
\dots\o \rho_{i-1}\o (\rho_{i}\o \rho_{i+1})\o\rho_{i+2}\o \dots\o\rho_k
$$
be an irreducible cuspidal subquotient of some  standard Jacquet modules of $\pi$. Suppose  
that the representation
$$
\rho_i
\t\rho_{i+1}
$$
 is irreducible. Then the only irreducible representation which can have in its Jacquet module $\rho_i
\o\rho_{i+1}$ is $\rho_i
\t\rho_{i+1}$. Observe that this representations has also $\rho_{i+1}
\o\rho_{i}$ in its Jacquet module. Now from the transitivity of Jacquet modules directly follows that
$$
\rho_1\o
\dots\o \rho_{i-1}\o (\rho_{i+1}\o \rho_i)\o\rho_{i+2}\o \dots\o\rho_k
$$
is also an irreducible cuspidal subquotient of the same   standard Jacquet modules of $\pi$.

\end{remark}

\section{Square integrable representations}
\label{square}

Let $\pi$ be an irreducible representation of some $GL(n,A)$ and let
$$
\nu_{\rho_1}^{a_1}\rho_1\o\nu_{\rho_2}^{a_2}\rho_2\o \dots\o\nu_{\rho_k}^{a_k}\rho_k
$$
be an irreducible cuspidal subquotient of some  standard Jacquet modules of $\pi$, where we assume that $
\rho_i$ are unitarizable representations of $GL(n_i,A)$, and $a_i\in\mathbb R$ (clearly, $n_1+\dots+n_k=n)$\footnote{In this case, this Jacquet module is a minimal non-zero Jacquet module of $\pi$, and  all the other irreducible subquotients of this Jacquet module are cuspidal. Conversely, if we take a minimal non-zero Jacquet module of $\pi$, the all the irreducible subquotients of this Jacquet module are cuspidal.}.
Now the square integrability criterion of Casselman (Theorem 6.5.1 of \cite{Ca}) says that $\pi$ is square integrable if and only if for all irreducible subquotients as above holds 
\begin{equation}
\label{=}
\sum_{i=1}^k n_i s_{\rho_i} a_i=0,
\end{equation}
 and further if we have
\begin{equation}
\label{>}
\sum_{i=1}^j n_i  s_{\rho_i} a_i>0; \ \ 1\leq j \leq k-1.
\end{equation}
Observe that the formula \eqref{m^*} implies that representations $
\d(
\D)$ are unitarizable in the case that $\D^+=\D$.

A direct consequence of the above square integrability criterion and the above remark is the following

\begin{corollary}  Let $\pi$ be an irreducible square integrable representation of some $GL(l,A)$ and let 
$$
\rho_1\o\rho_2\o \dots\o\rho_k
$$
be an irreducible cuspidal subquotient of some  standard Jacquet modules of $\pi$.
Then the set $\{\rho_1,\rho_2, \dots,\rho_k\}$ is a segment of irreducible cuspidal representations\footnote{In the moment, we do not claim that there is no repetitions among representations $\rho_1,\rho_2, \dots,\rho_k$ (this will be proved  later)}.
\end{corollary}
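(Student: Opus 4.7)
The plan is to combine Casselman's criterion (\eqref{=} and \eqref{>}) with the reordering principle stated in the Remark just before this corollary: whenever $\rho_i\t\rho_{i+1}$ is irreducible, swapping the two entries in an irreducible cuspidal subquotient of a standard Jacquet module of $\pi$ produces another such subquotient. From the discussion at the start of Section \ref{notations} together with Proposition \ref{link}, for cuspidal $\rho_i$ and $\rho_{i+1}$ the product $\rho_i\t\rho_{i+1}$ is irreducible unless $\rho_{i+1} \cong \nu_{\rho_i}^{\pm 1} \rho_i$. Writing $\rho_i = \nu_{\sigma_i}^{a_i} \sigma_i$ with $\sigma_i$ unitary and setting $w_i := n_i s_{\sigma_i} a_i$, Casselman asserts $\sum_i w_i = 0$ and that every reordering of the $\rho_i$'s achievable by iterated allowed swaps satisfies the strict partial-sum positivity \eqref{>}.

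First I would show that all the $\rho_i$ lie on a single $\nu$-line, where two cuspidals $\rho, \rho'$ are called equivalent if $\rho' \cong \nu_\rho^m \rho$ for some $m \in \Z$. Cuspidals from distinct classes are never linked, hence have irreducible product, so a bubble-sort whose only swaps are across classes realizes any class-block ordering as a new Jacquet subquotient. If the multiset $(\rho_1,\dots,\rho_k)$ split into classes $L_1,\dots,L_r$ with $r \geq 2$, bringing each $L_t$ in turn to the initial block and applying Casselman would give $\sum_{i \in L_t} w_i > 0$; summing over $t$ contradicts $\sum_i w_i = 0$.

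Next, with all $\rho_i$ on a single line, fix $\rho \in \{\rho_1,\dots,\rho_k\}$ and write $\rho_i = \nu_\rho^{b_i}\rho$ with $b_i \in \Z$. I would show that $\{b_1,\dots,b_k\}$ has no gap. If some integer $b^*$ with $\min_i b_i < b^* < \max_i b_i$ were not in this set, then $A := \{i : b_i < b^*\}$ and $B := \{i : b_i > b^*\}$ are both nonempty, and every cross pair $(i,j) \in A \times B$ has $|b_i - b_j| \geq 2$, so $\rho_i$ and $\rho_j$ are unlinked and $\rho_i \t \rho_j$ is irreducible. Bubble-sorting $A$ before $B$ then yields $\sum_{i \in A} w_i > 0$, and the reversed ordering yields $\sum_{i \in B} w_i > 0$; adding contradicts $\sum_i w_i = 0$. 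Hence $\{b_i\}$ is a contiguous interval and $\{\rho_1,\dots,\rho_k\}$ is a segment.

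The main point that wants careful checking is the legitimacy of these two bubble sorts: in both arguments the desired reordering merges two mutually unlinked blocks, so every adjacent transposition the sort performs is between elements of distinct blocks, hence corresponds to an irreducible cuspidal product, and the Remark applies at each step; no linked pair is ever exchanged.
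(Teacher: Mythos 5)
Your argument is correct and is essentially the paper's: both rest on partitioning the cuspidal multiset into nonempty blocks whose pairwise cross products are all irreducible, reordering so that a given block comes first, and then playing the strict positivity \eqref{>} of Casselman's partial sums for each block against the vanishing \eqref{=} of the total sum. The only cosmetic differences are that you realize the reordering via the Remark on Jacquet-module swaps --- which is precisely the route the paper announces as a ``direct consequence'' before presenting its ``slightly different proof'' via Frobenius reciprocity and commuting factors of the induced-from-cuspidal representation --- and that you split the failure of being a segment into two cases (distinct cuspidal lines, or a gap on one line), which the paper subsumes in a single decomposition into $X'$ and $X''$.
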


\begin{proof} We present here a slightly different proof. Suppose that $\{\rho_1,\rho_2, \dots,\rho_k\}$ is not a segment of irreducible cuspidal representations. Then we can write this set as a disjoint union of two non-empty sets $X'$ and $X''$ such that
\begin{equation}
\label{commute}
\text{$\rho'\t\rho''$ is irreducible for all $\rho'\in X'$ and $\rho''\in X''$.}
\end{equation}
Take any irreducible quotient $\s$ of the Jacquet module  which has the representation  $\rho_1\o\rho_2\o \dots\o\rho_k$ for a subquotient. Then $\s\cong \rho_1'\o\rho_2'\o \dots\o\rho_k'$ for some irreducible cuspidal representations $\rho_i'$ of groups $G_l$'s, where $\rho_1',\rho_2' \dots,\rho_k'$ is some permutation of $\rho_1,\rho_2, \dots,\rho_k$ (see \cite{Ca} or \cite{Z}). The Frobenius reciprocity implies
$$
\pi \h \rho_1'\t\rho_2'\t \dots\t\rho_k'.
$$
Let $\a_1,\dots,\a_u$ be the subsequence of $\rho_1',\rho_2', \dots,\rho_k'$ consisting of all the representations which belong to $X'$. Analogously, let $\b_1,\dots,\b_v$ be the subsequence of $\rho_1',\rho_2', \dots,\rho_k'$ consisting of all the representations which belong to $X''$. Clearly, $1\leq u,v<k$ (and $u+v=k$). Now \eqref{commute} implies
$$
\pi\h \a_1\t\dots\t\a_u\t\b_1\t\dots\t\b_v\ \text{and} \ \pi\h\b_1\t\dots\t\b_v\t \a_1\t\dots\t\a_u.
$$
Let $\a_i\cong \nu_{\a_i^{(u)}}^{x_i}\a_i^{(u)}$, where $x_i\in\R$ and $\a_i^{(u)}$ is a unitarizable representation of $G_{c_i}$. Analogously, let $\b_j\cong \nu_{\b_j^{(u)}}^{y_j}\b_j^{(u)}$, where $y_j\in\R$ and $\b_j^{(u)}$ is a unitarizable representation of $G_{d_j}$. Then the above two embedding, the Frobenius reciprocity and the Casselmen square integrability criterion  imply
$$
\sum_{i=1}^u c_i  s_{\a_i^{(u)}}  x_i>0 \text{\ and \ } \sum_{j=1}^vd_j  s_{\b_j^{(u)}} y_j>0.
$$
 This implies $\sum_{i=1}^u c_i s_{\a_i^{(u)}} x_i+ \sum_{j=1}^vd_j s_{\b_j^{(u)}} y_j>0$, which contradicts to \eqref{=} of the Casselmen square integrability criterion. This contradiction completes the proof of the corollary.
\end{proof}

Now we can prove the following

\begin{proposition}
\label{si}
 Let $\s$ be an irreducible square integrable representation of some $GL(l,A)$. Then there exist a segment $\D$ of irreducible cuspidal representations such that
$$
\s=\d(\D)
$$
and $\D^+=\D$.
\end{proposition}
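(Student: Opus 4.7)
Apply the preceding corollary to $\sigma$: fix an irreducible cuspidal subquotient
$$
\rho_1\otimes \rho_2\otimes \cdots \otimes \rho_k
$$
of some minimal non-zero standard Jacquet module of $\sigma$. The corollary tells us that $\{\rho_1,\ldots,\rho_k\}$ is a segment of cuspidal representations, so we may write $\Delta=\{\rho,\nu_\rho\rho,\ldots,\nu_\rho^n\rho\}$ and $\rho_i=\nu_\rho^{c_i}\rho$ with $c_i\in\{0,1,\ldots,n\}$. Fix $\rho^{(u)}$ unitary cuspidal and $t\in\mathbb{R}$ with $\rho=\nu^t\rho^{(u)}$; then the Casselman exponent of $\rho_i$ is $t+c_i s_\rho$.

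The central step is to show that the multiset $(c_1,\ldots,c_k)$ has no repetitions. Suppose for contradiction that some value appears more than once. The plan is to use Lemmas~\ref{t1} and \ref{t2} to produce further irreducible cuspidal subquotients of the same Jacquet module of $\sigma$, obtained from $(c_1,\ldots,c_k)$ by the adjacent-triple swaps permitted by those lemmas. Iterating the switching rules, one can bubble a repeated value into a leading position and thereby produce an ordering whose partial sum $\sum_{i=1}^m(t+c_i s_\rho)$ at some $m<k$ violates Casselman's strict positivity \eqref{>}: indeed, the total-sum equation \eqref{=} already forces $t$ to equal $-s_\rho$ times the average of the $c_i$'s, so any ordering in which the first $m$ exponents average strictly below the global average gives a contradiction. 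Hence no repetition occurs, so $k=n+1$ and the multiset is exactly $\{0,1,\ldots,n\}$.

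With the multiset fixed, \eqref{=} yields $t=-ns_\rho/2$, and the strict inequalities \eqref{>} become $\sum_{i=1}^m c_i>mn/2$ for $1\leq m\leq n$; this forces the unique strictly decreasing order $c_1=n,\,c_2=n-1,\,\ldots,\,c_{n+1}=0$, since any other order gives at some position a partial sum strictly smaller than $mn/2$. The minimal Jacquet module subquotient of $\sigma$ is therefore $\nu_\rho^n\rho\otimes\nu_\rho^{n-1}\rho\otimes\cdots\otimes\rho$, and the characterization of $\delta(\Delta)$ recalled in Section~\ref{notations} yields $\sigma\cong\delta(\Delta)$. Finally, from $t=-ns_\rho/2$ one obtains $\rho^+=\nu^{ns_\rho/2}\rho^{(u)}=\nu_\rho^n\rho$, hence $(\nu_\rho^i\rho)^+=\nu_\rho^{n-i}\rho$ and so $\Delta^+=\Delta$. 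The main obstacle is the no-repetitions step: Lemmas~\ref{t1}--\ref{t2} permit only very specific adjacent-triple permutations, so one must chain them carefully to extract, from any repeated configuration, an ordering realized in the Jacquet module that violates Casselman's criterion.
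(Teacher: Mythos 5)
Your overall strategy (minimal Jacquet module, Casselman's criterion, the switching Lemmas \ref{t1}--\ref{t2}) is the right toolkit, but the two pivotal steps of your plan do not go through as written.

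First, the no-repetitions step is not a proof but a programme, and the hard case is exactly the one you defer. The switching rules are very restrictive: adjacent exponents differing by exactly $1$ cannot be transposed at all, and Lemmas \ref{t1}--\ref{t2} only act on specific triples. It is not true that from an arbitrary repeated configuration one can "bubble" a repeated value forward until some partial sum fails \eqref{>}; for instance, a configuration of exponents consisting of several interleaved decreasing runs (the analogue of $(1,0)(0,-1)$, $(2,1,0)(1,0,-1)(0,-1,-2)$, etc.) satisfies all of Casselman's inequalities, and no sequence of permitted moves is exhibited that reaches a violating ordering. The paper does not prove no-repetitions first at all: it takes the \emph{lexicographically minimal} cuspidal subquotient, uses Lemmas \ref{t1}--\ref{t2} only to show this minimal term is a concatenation of decreasing runs $(\nu_\rho^{c_1}\rho,\dots,\nu_\rho^{d_1}\rho),\dots,(\nu_\rho^{c_s}\rho,\dots,\nu_\rho^{d_s}\rho)$ with $c_1\leq\dots\leq c_s$, deduces an embedding $\s\h\d([\nu_\rho^{d_1}\rho,\nu_\rho^{c_1}\rho])\t\dots\t\d([\nu_\rho^{d_s}\rho,\nu_\rho^{c_s}\rho])$, and then kills the case $s\geq 2$ by a completely different mechanism: picking the first index with $c_i+d_i\leq 0$, noting that its segment contains all earlier ones, commuting it to the front via Lemma \ref{subset}, and contradicting \eqref{>}. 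Multiplicity-freeness of the support is a corollary, not an input.

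Second, even granting no repetitions, your final deduction is false: Casselman's inequalities do \emph{not} single out the decreasing ordering of $\{0,1,\dots,n\}$. For $n=3$ the ordering $(2,3,1,0)$ has partial sums $2,5,6$, all strictly greater than $3m/2$ for $m=1,2,3$; more generally $(n-1,n,n-2,n-3,\dots,0)$ satisfies all the inequalities for $n\geq 3$. So "any other order gives at some position a partial sum strictly smaller than $mn/2$" is wrong, and you cannot conclude that the term $\nu_\rho^n\rho\o\nu_\rho^{n-1}\rho\o\dots\o\rho$ actually occurs in the Jacquet module. You need an extra argument (the paper's lexicographic-minimality analysis serves precisely this purpose) to force the decreasing ordering to appear before invoking the characterization of $\d(\D)$. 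The computation of $\D^+=\D$ at the end is fine.
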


\begin{proof} The proof bellow is simple modification of \cite{J} (mainly of Lemma 2.2.2 there).
  By the previous corollary, there exists a unitarizable irreducible cuspidal representation $\rho$ and $0
\leq \a <1$ such that the cuspidal support of $\s$ is contained in $
\{\nu_\rho^{\a+z}\rho;z
\in 
\Z\}$. We now consider all  the irreducible cuspidal subquotients 
$$
\nu_{\rho}^{a_1}\rho\o\nu_{\rho}^{a_2}\rho\o \dots\o\nu_{\rho}^{a_k}\rho
$$
of the minimal standard non-trivial Jacquet module of $\s$. Among them, fix an irreducible sub quotient such that $(a_1,a_2,\dots,a_k)$  is minimal with respect to the lexicographical order on $\mathbb R^k$.  We know from the square integrability criterion that 
 $$
\sum_{i=1}^k a_i=0
$$
 and that all the following inequalities
$$
\sum_{i=1}^j a_i>0; \ \ 1\leq j \leq k-1,
$$
hold (observe that all the $s_{\rho_i}$ from the Casselman square integrability criterion are the same (and positive), so we can divide the relations in the criterion by this constant - we shall use this in the sequel).
Denote $c_1=a_1$.

Let $ 1\leq \ell_1
\leq k$ be the minimal index such that $a_{\ell_1+1}\geq a_1$, if such $\ell_1$ exists. If there is no $\ell_1$ such that $a_{\ell_1+1}\geq a_1$, we take $\ell_1$ to be $k$.

Suppose that $\ell_1>1$. Then the minimality and the above remark imply $a_2=a_1-1$.

Suppose $\ell_1>2$. Then  cannot have $a_3<a_1-2$ (since if this would be the case, using the above remark we would get a contradiction to the minimality). Suppose that $a_3>a_2-2$. Then we must have $a_3=a_1-1$. Now  Lemma \ref{t1} implies that there exists strictly smaller term (with respect to the lexicographic ordering) in the Jacquet module of $\s$. This contradiction implies that $a_3=a_1-2$.

Now we shall prove that in general holds
$$
a_i=a_1-i+1, \ \  1\leq i\leq \ell_1.
$$
We prove this by induction. We have seen that this hold if $i\leq 3$ and $i\leq \ell_1$. Suppose $3\leq l <\ell_1$, and that the above claim holds for $i\leq l$. 

Suppose $a_{l+1}\ne a_1-l$. First, we cannot have  $a_{l+1}< a_1-l$ (since if this would be the case, using the above remark we would get a contradiction to the minimality again). Suppose  $a_{l+1}= a_1-l+1$. Now applying Lemma \ref{t1}, we get a contradiction to the minimality. 

Thus,
$$
 a_1-l+2\leq a_{l+1}\leq a_1-1.
$$
Therefore,
$$
a_{l+1}=a_1-j
$$
for some $j$ satisfying
$$
1\leq j\leq l-2.
$$

Now applying the above remark several times, we would get that in the Jacquet module of $\s$ must be a term of the form
$$
\nu_\rho^{a_1}\rho\o \nu_\rho^{a_1-1}\rho
\o  \dots\o\nu_\rho^{a_1-j+2}\rho\o
\nu_\rho^{a_1-j+1}\rho\o
(\nu_\rho^{a_1-j}\rho
\o
\nu_\rho^{a_1-j-1}\rho\o \nu_\rho^{a_1-j}\rho)\o\dots.
$$
Lemma \ref{t2} implies that we must have at least one of the additional two representations listed in that lemma. The minimality implies that we cannot have the  last representation listed there.  Therefore, we must have a representation of the form
$$
\nu_\rho^{a_1}\rho\o \nu_\rho^{a_1-1}\rho
\o  \dots\o
\nu_\rho^{a_1-j+2}\rho\o
\nu_\rho^{a_1-j+1}\rho\o
(\nu_\rho^{a_1-j}\rho
\o
\nu_\rho^{a_1-j}\rho\o \nu_\rho^{a_1-j-1}\rho)\o\dots
$$
$$
=\nu_\rho^{a_1}\rho\o \nu_\rho^{a_1-1}\rho
\o  \dots\o
\nu_\rho^{a_1-j+2}\rho\o
(\nu_\rho^{a_1-j+1}\rho\o
\nu_\rho^{a_1-j}\rho
\o
\nu_\rho^{a_1-j}\rho)\o \nu_\rho^{a_1-j-1}\rho\o\dots.
$$
Now Lemma \ref{t1} implies that we must have a term of the form
$$
\nu_\rho^{a_1}\rho\o \nu_\rho^{a_1-1}\rho
\o  \dots\o \nu_\rho^{a_1-j+2}\rho\o
(\nu_\rho^{a_1-j}\rho\o
\nu_\rho^{a_1-j+1}\rho
\o
\nu_\rho^{a_1-j}\rho)\o \nu_\rho^{a_1-j-1}\rho\o\dots
$$
in the Jacquet module of $\s$.
This contradicts to the minimality. Therefore, we have completed the proof of our claim.

Continuing this procedure, we get that the minimal element is of the form
$$
(\nu_\rho^{c_1}\rho\o \nu_\rho^{c_1-1}\rho
\o  \dots\o
\nu_\rho^{d_1}\rho)\o
(\nu_\rho^{c_2}\rho
\o
\nu_\rho^{c_2-1}\rho\o \dots\o \nu_\rho^{d_2}\rho)\o\dots
\o
(\nu_\rho^{c_s}\rho\o \nu_\rho^{c_s-1}\rho
\o  \dots\o
\nu_\rho^{d_s}\rho),
$$
where
$$
c_1\leq c_2\leq \dots\leq c_s.
$$

This implies 
$$
\s\leq
(\nu_\rho^{c_1}\rho\t\nu_\rho^{c_1-1}\rho
\t  \dots\t
\nu_\rho^{d_1}\rho)\t
(\nu_\rho^{c_2}\rho
\t
\nu_\rho^{c_2-1}\rho\t \dots\t \nu_\rho^{d_2}\rho)\t\dots
\t(
\nu_\rho^{c_s}\rho\t \nu_\rho^{c_s-1}\rho
\t  \dots\t
\nu_\rho^{d_s}\rho).
$$
Now one can easily show (using the Frobenius reciprocity and the induction in stages) that there exist  irreducible subquotients $\s_i$ of 
$$
\nu_\rho^{c_i}\rho\t\nu_\rho^{c_i-1}\rho
\t  \dots\t
\nu_\rho^{d_i}\rho
$$
such that
$$
\s\h \s_1\t \dots\t\s_s.
$$
Suppose that some $\s_i\ne \d([ \nu_\rho^{d_i}\rho,  \nu_\rho^{c_i}\rho ])$. Chose the lowest  index $i$ for which this holds. Then 
$$
\nu_\rho^{c_i}\rho\o\nu_\rho^{c_i-1}\rho
\o  \dots\o
\nu_\rho^{d_i}\rho
$$
in not in the Jacquet module of $\s_i$, and all the  irreducible cuspidal subquotients of $\s$ are strictly smaller lexicographically. This would directly produce an irreducible subquotient strictly smaller then the minimal one. This contradiction implies that  $\s_i\cong  \d([ \nu_\rho^{d_i}\rho,  \nu_\rho^{c_i}\rho ])$ for all indexes $i$.

Thus,
\begin{equation}
\label{emb}
\s\h  \d([ \nu_\rho^{d_1}\rho,  \nu_\rho^{c_1}\rho ])\t
\dots\t
\d([ \nu_\rho^{d_s}\rho,  \nu_\rho^{c_s}\rho ]).
\end{equation}

If $s=1$, then the proposition obviously hold. Suppose $s\geq 2$. The square integrability criterion implies 
$$
c_1+d_1>0.
$$
The (single) equality relation in the square integrability criterion implies that we can not have 
$$
c_i+d_i>0
$$
for 
all indexes $i$. Choose the lowest index $i$ such that this is not the case.
Then we know that holds
$$
c_1\leq \dots \leq c_i,
$$
$$
-c_1<d_1,\dots, -c_{i-1}<d_{i-1},
$$
and
$$
d_i\leq -c_i.
$$
This implies that for $j<i$ hold
$$
   d_i \leq -c_i\leq -c_j< d_j,
$$
which implies $d_i<d_j$ and $c_j\leq c_i$. Therefore, the segment $[ \nu_\rho^{d_i}\rho,  \nu_\rho^{c_i}\rho ]$ contains all the segments $[ \nu_\rho^{d_j}\rho,  \nu_\rho^{c_j}\rho ]$ with $j<i$.
 Now Lemma \ref{subset} and the  relation \eqref{emb} imply
$$
\s\h  \d([ \nu_\rho^{i}\rho,  \nu_\rho^{i}\rho ])\t
\dots.
$$
Using the Frobenius reciprocity and applying the square integrability criterion, we get 
$
c_i+d_i>0.
$
This obviously contradicts our assumption 
$
c_i+d_i\leq 0.
$
This contradiction completes the proof.
\end{proof}

Now Propositions \ref{link}, \ref{si} and  \ref{irr-uni-seg}\footnote{Instead of Proposition \ref{irr-uni-seg}, we can use 
the Harish-Chandra commuting algebra theorem.
This theorem and Proposition \ref{link} imply that all the generators of the commuting algebra of a representation parabolically induced by an irreducible square integrable representation are scalar operators. This implies the irreducibility of the induced representation.} imply the following

\begin{corollary} 
The tempered induction for $GL(n,A)$ is irreducible. In other words, if $\tau_1,\dots,\tau_k$ are irreducible tempered representations of general linear groups over $A$, the $\tau_1\t\dots\t\tau_k$ is irreducible (and tempered).
\qed
\end{corollary}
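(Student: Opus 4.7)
The plan is to reduce the corollary directly to Proposition \ref{irr-uni-seg} via a structural description of the irreducible tempered representations of $GL(n,A)$. The one external ingredient I would invoke from the general theory of reductive $p$-adic groups is the classical fact that every irreducible tempered representation $\tau$ of such a group is an irreducible subquotient of a representation parabolically induced from an irreducible discrete series (unitary square integrable) representation of a Levi subgroup. For $GL(n,A)$ the Levi is a product of smaller general linear groups, so the inducing datum takes the form $\sigma_1\otimes\dots\otimes\sigma_m$ with each $\sigma_j$ an irreducible square integrable representation of some $GL(n_j,A)$.

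First I would apply Proposition \ref{si} to each such $\sigma_j$, writing $\sigma_j\cong\d(\Gamma_j)$ for a unitary segment of cuspidal representations $\Gamma_j$ (i.e. $\Gamma_j^+=\Gamma_j$). Proposition \ref{irr-uni-seg} then tells us that the induced representation $\d(\Gamma_1)\t\dots\t\d(\Gamma_m)$ is already irreducible, hence coincides with $\tau$. In other words, every irreducible tempered representation of $GL(n,A)$ can be written in the form
$$
\d(\Gamma_1)\t\dots\t\d(\Gamma_m)
$$
with all segments $\Gamma_j$ unitary.

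Given now irreducible tempered $\tau_1,\dots,\tau_k$, I would use the previous paragraph to fix such presentations $\tau_i\cong\d(\Gamma_{i,1})\t\dots\t\d(\Gamma_{i,m_i})$ with each $\Gamma_{i,j}$ unitary. By induction in stages, $\tau_1\t\dots\t\tau_k$ is isomorphic to the single product of all the $\d(\Gamma_{i,j})$. Since this is again a product of $\d(\Gamma)$'s attached to unitary segments, Proposition \ref{irr-uni-seg} applies and yields its irreducibility. Temperedness is then automatic, as the resulting irreducible representation is by construction parabolically induced from the square integrable representation $\bigotimes_{i,j}\d(\Gamma_{i,j})$ of a Levi subgroup.

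There is essentially no obstacle here once Propositions \ref{si} and \ref{irr-uni-seg} are in hand; the argument is formal and short. The only point that deserves a careful word is the citation of the general fact that every irreducible tempered representation is an irreducible subquotient of a parabolic induction from a discrete series of a Levi, which is a classical result going back to Harish-Chandra, Casselman and Waldspurger, and which is already in the spectrum of basic theory the paper declares it will use.
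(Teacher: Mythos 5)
Your proposal is correct and is essentially the argument the paper intends: the corollary is stated as an immediate consequence of Propositions \ref{si} and \ref{irr-uni-seg} (together with the standard fact that irreducible tempered representations are subquotients of inductions from discrete series), which is exactly the reduction you carry out. The only difference is cosmetic: the paper also records, in a footnote, an alternative route replacing Proposition \ref{irr-uni-seg} by Proposition \ref{link} combined with the Harish-Chandra commuting algebra theorem, which you do not need.
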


\section{Appendix - the case of two linked segments}

Considerable  part of  \cite{T-Crelle} is devoted to the determining of the composition series of $\d(\D_1)\t\d(\D_2)$ for linked segments $\D_1$ and $\D_2$. This information  is obtained there using the properties of the Langlands classification. This can be obtained also  using the Jacquet modules (it is a little bit  more elementary). Since we have presented proofs of two key facts of the representation theory of groups $G_n$ based on Jacquet modules, it is natural to have also the proof of this important fact based  on Jacquet modules.
We  present such proof below.

Let $\D_1$ and $\D_2$ be linked segments. We can write them as
$$
\D_i=[ \nu_\rho^{n_i}\rho,  \nu_\rho^{m_i}\rho ], \qquad i=1,2,
$$
where $n_i,m_i\in \R$.
After possible changing of indexes, we shall assume $n_1<n_2$. We shall use the interpretation of the Langlands classification in term of finite multisets of segments  (see \cite{T-Crelle}).  For our purpose, we need only to know that $L(\D_1,\D_2)$ denotes the unique irreducible quotient of $\d(\D_2)\t\d(\D_1)$.

\begin{lemma}
\label{l1}
Suppose
$$
m_1+1=n_2.
$$
 Then in the Grothendieck group of the category of representations of finite length we have
$$
\d(\D_1)\t\d(\D_2)=L(\D_1,\D_2)+\d(\D_1\cup\D_2).
$$
\end{lemma}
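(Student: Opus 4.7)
The plan is to identify both $\d(\D_1 \cup \D_2)$ and $L(\D_1, \D_2)$ as composition factors of $\d(\D_1) \t \d(\D_2)$, verify they are distinct, and show the induced representation has length exactly two. For the first identification, the hypothesis $n_2 = m_1+1$ produces the embedding chain
$$\d(\D_2) \t \d(\D_1) \h \nu_\rho^{m_2}\rho \t \nu_\rho^{m_2-1}\rho \t \cdots \t \nu_\rho^{n_1}\rho,$$
whose unique irreducible subrepresentation is $\d(\D_1 \cup \D_2)$. Since $\d(\D_2) \t \d(\D_1)$ is a non-zero subrepresentation of the right hand side, any of its irreducible subrepresentations must coincide with the unique one of the ambient product; hence $\d(\D_1 \cup \D_2) \h \d(\D_2) \t \d(\D_1)$, and commutativity of $R$ transfers this to a composition factor of $\d(\D_1) \t \d(\D_2)$. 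The factor $L(\D_1, \D_2)$ is a composition factor by its defining property as the unique irreducible quotient of $\d(\D_2) \t \d(\D_1)$.

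To separate the two factors I would track the multiplicity of the decreasing cuspidal tensor $\nu_\rho^{m_2}\rho \o \nu_\rho^{m_2-1}\rho \o \cdots \o \nu_\rho^{n_1}\rho$ in the minimal Jacquet modules. Iterating \eqref{m^*}, this tensor occurs in the minimal Jacquet module of $\d(\D_1) \t \d(\D_2)$ with multiplicity equal to the number of shuffles of the two decreasing cuspidal tensors of $\d(\D_1)$ and $\d(\D_2)$ that remain strictly decreasing; the inequality $\nu_\rho^{m_1+1}\rho > \nu_\rho^{m_1}\rho$ allows only the shuffle that lists all factors coming from $\d(\D_2)$ first, so the multiplicity is one. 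Formula \eqref{m^*} shows the same tensor already has multiplicity one in the minimal Jacquet module of $\d(\D_1 \cup \D_2)$, exhausting the total multiplicity. Consequently no other composition factor of $\d(\D_1)\t\d(\D_2)$ can carry this tensor; in particular $L(\D_1, \D_2) \neq \d(\D_1 \cup \D_2)$.

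For the length count I would argue by induction on $|\D_1|+|\D_2|$. The base case $|\D_1|=|\D_2|=1$ is the length-two reduction $\rho_0 \t \nu_\rho \rho_0 = \d([\rho_0, \nu_\rho\rho_0]) + \mathfrak{z}([\rho_0, \nu_\rho\rho_0])$ that defines $\mathfrak{z}$, with $L(\D_1,\D_2) = \mathfrak{z}([\rho_0,\nu_\rho\rho_0])$. For the inductive step the formula
$$m^*_{bottom}(\d(\D_1) \t \d(\D_2)) = [\d(^-\D_1) \t \d(\D_2)] \o b(\D_1) + [\d(\D_1) \t \d(^-\D_2)] \o b(\D_2),$$
combined with the inductive hypothesis applied to the still-linked pair $(^-\D_1, \D_2)$, together with Proposition \ref{link} applied to the no-longer-linked pair $(\D_1, {}^-\D_2)$ (their union omits $b(\D_2) = \nu_\rho^{m_1+1}\rho$), decomposes the right hand side into a sum of at most three irreducible tensor products. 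Exactly one of these, namely $\d(^-(\D_1 \cup \D_2)) \o b(\D_1)$, equals $m^*_{bottom}(\d(\D_1 \cup \D_2))$.

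The main obstacle is the closing move: showing that the one or two remaining $m^*_{bottom}$ terms are jointly carried by a single further composition factor, which must then be $L(\D_1, \D_2)$, rather than distributed between two extra factors. The approach I would pursue is that any irreducible subquotient $\pi \leq \d(\D_1)\t\d(\D_2)$ with a given surviving $m^*_{bottom}$ term embeds via Frobenius reciprocity into the corresponding product $L(^-\D_1, \D_2)\t b(\D_1)$ or $\d(\D_1)\t\d(^-\D_2)\t b(\D_2)$, and a comparison of Jacquet-module composition factors against the already-known position of the Langlands quotient $L(\D_1,\D_2)$ as a composition factor of $\d(\D_2)\t\d(\D_1)$ should force $\pi \cong L(\D_1,\D_2)$ and also force $L(\D_1,\D_2)$ to contribute both remaining terms simultaneously. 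This rules out any third factor and yields length exactly two, each factor occurring with multiplicity one, as claimed.
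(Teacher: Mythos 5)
Your setup --- the embedding argument identifying $\d(\D_1\cup\D_2)$ inside $\d(\D_2)\t\d(\D_1)$, the induction on $|\D_1|+|\D_2|$, and the decomposition of $m^*_{bottom}(\d(\D_1)\t\d(\D_2))$ into at most three irreducible pieces via Proposition \ref{link} applied to $(\D_1,{}^-\D_2)$ and the inductive hypothesis applied to $({}^-\D_1,\D_2)$ --- is exactly the paper's, and is correct as far as it goes. But the decisive step is missing. When $|\D_1|\geq 2$ the bottom Jacquet module has length three, and you still must show that these three constituents are shared among at most two irreducible subquotients of $\d(\D_1)\t\d(\D_2)$. The mechanism you propose for this --- embedding a subquotient $\pi$ into $L({}^-\D_1,\D_2)\t b(\D_1)$ or $\d(\D_1)\t\d({}^-\D_2)\t b(\D_2)$ "via Frobenius reciprocity" and then "comparing Jacquet modules against the position of $L(\D_1,\D_2)$" --- does not work as stated: Frobenius reciprocity yields an embedding only when the relevant term is a \emph{quotient} of the Jacquet module, not merely a subquotient, and any comparison with the Jacquet module of $L(\D_1,\D_2)$ is circular, since determining that Jacquet module is precisely what the lemma accomplishes. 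You correctly flag this as "the main obstacle", but as written the argument only bounds the length by three.

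The paper closes this gap with a concrete commutation trick. Let $\pi$ be the irreducible subquotient whose Jacquet module contains $\d(\D_1)\t\d({}^-\D_2)\o\nu_\rho^{n_2}\rho$; refining, $\pi$ has a cuspidal Jacquet-module term ending in $\dots\o\nu_\rho^{n_1}\rho\o\nu_\rho^{n_2}\rho$. In the remaining case $n_1<m_1=n_2-1$ one has $n_1+2\leq n_2$, so $\nu_\rho^{n_1}\rho\t\nu_\rho^{n_2}\rho$ is irreducible, and the Remark at the end of the section on Jacquet modules (adjacent cuspidal factors may be transposed whenever their product is irreducible) shows $\pi$ also has a term ending in $\dots\o\nu_\rho^{n_2}\rho\o\nu_\rho^{n_1}\rho$. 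By transitivity such a term can only come from the summand $\d({}^-\D_1)\t\d(\D_2)\o\nu_\rho^{n_1}\rho$, so $\pi$ absorbs at least two of the three constituents and the length is at most two. You should supply this (or an equivalent) argument. Two smaller points: your deduction that $L(\D_1,\D_2)\neq\d(\D_1\cup\D_2)$ from the multiplicity-one count of the decreasing cuspidal tensor is roundabout and not quite complete as stated (the two representations simply have different Langlands data, which settles it at once); and the case $|\D_1|=1$, where $({}^-\D_1,\D_2)$ is not a pair of linked segments and the bottom Jacquet module already has length two, should be noted separately.
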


\begin{proof} Observe that 
$$
\d([\rho^{n_1},\nu_\rho^{m_2}\rho])\h\nu_\rho^{m_2}\rho\t\nu_\rho^{m_2-1}\rho\t\dots \t\nu_\rho^{n_1}\rho,
$$
$$
\d([\rho^{n_2},\nu_\rho^{m_2}\rho]) \t
\d([\rho^{n_1},\nu_\rho^{m_1}\rho])\h
(\nu_\rho^{m_2}\rho\t\nu_\rho^{m_2-1}\rho\t\dots \t\nu_\rho^{n_2}\rho) \t
(\nu_\rho^{m_1}\rho\t\nu_\rho^{m_1-1}\rho\t\dots \t\nu_\rho^{n_1}\rho)
$$
(we use here that $m_1+1=n_2$). Since the representations on the right hand sides are isomorphic, and they have the unique irreducible subrepresentations, we get that $\d(\D_1\cup\D_2)\h\d(\D_2)\t\d(\D_1)$. Thus, $\d(\D_1\cup\D_2)$ is a subquotient of $\d(\D_1)\t\d(\D_2)$ (recall that $R$ is commutative). Therefore, $\d(\D_1\cup\D_2)$ and $L(\D_1,\D_2)$ are sub quotients of $\d(\D_1)\t\d(\D_2)$. Obviously, $\d(\D_1\cup\D_2)$ and $L(\D_1,\D_2)$ are not isomorphic (since their Langlands parameters are different). 

The lemma will be proved if we show that $\d(\D_1)\t\d(\D_2)$ has length at most two.
We shall prove the this by induction of the sum of cardinalities of $\D_1$ and $\D_2$. If the sum is two, then we know this (since the minimal non-zero (standard) Jacquet module is of length two). Suppose that the sum of cardinalities of $\D_1$ and $\D_2$ is $\geq 3$, and that the claim holds for lower sums of cardinalities. Write
$$
m_{bottom}^*(\d(\D_1)\t\d(\D_2))=\d(\D_1)\t\d(^-\D_2)\o\nu_\rho^{n_2}\rho+
\d(^-\D_1)\t\d(\D_2)\o\nu_\rho^{n_1}\rho.
$$
The first representation on the right hand side is irreducible by Proposition \ref{link}. 

If $n_1=m_1$, then also the second one is obviously  irreducible. This implies that the multiplicity is at most two in the case $n_1=m_1$.

It remains to consider the case If $n_1<m_1$. Now the inductive assumption implies that the second representation on the right hand side has length two. Therefore, the length of the above Jacquet module is three. Let $\pi$ be an irreducible sub quotient of $\d(\D_1)\t\d(\D_2)$ which has the first representation on the right hand side in its Jacquet module. Then  $\pi$ has an irreducible representation of the form
$$
\dots \o \nu_\rho^{n_1}\rho\o \nu_\rho^{n_2}\rho
$$
in its Jacquet module. Observe that $n_1+1\leq m_1=n_2-1$, which gives $n_1+2\leq n_2$. Therefore, 
$\nu_\rho^{n_1}\rho\o \nu_\rho^{n_2}\rho$ is irreducible. Now Remark 3.3 implies that $\pi$ has also an irreducible representation of the form
$$
\dots \o \nu_\rho^{n_2}\rho\o \nu_\rho^{n_1}\rho
$$
in its Jacquet module. By the transitivity of Jacquet modules, this term  must come from some term of $m_{bottom}^*(\d(\D_1)\t\d(\D_2))$, and it can not obviously come from the first representation on the right hand side. Therefore, we must have in the Jacquet module of $\pi$ also at least one sub quotient of the second representation on the right hand side. This implies that the length of $\d(\D_1)\t\d(\D_2)$ is at most two. The proof of the lemma is now complete.
\end{proof}

We continue with previous linked segments $\D_1$ and $\D_2$. Decompose $\D_1$ into two segments
$$
\D_{1,l}=\d([\rho^{n_1},\nu_\rho^{n_2-1}\rho]), \qquad \D_{1,r}=\d([\rho^{n_2},
\nu_\rho^{m_1}\rho]),
$$
where we take the second segment to be empty set if $n_2=m_1+1$. Then $\D_{1,r}=\D_1\cap\D_2$ and $\D_{1,l}\cup\D_{2}=\D_1\cup\D_2$ ($\D_{1,l}$ and $\D_{2} $ are disjoint). Now the last lemma implies
\begin{equation}
\label{in1}
\d(\D_{1})\t\d(\D_{2})\leq \d(\D_{1,l})\t\d(\D_{1,r})\t \d(\D_{2}),
\end{equation}
\begin{equation}
\label{in2}
\d(\D_{1}\cup\D_{2})\t\d(\D_{1}\cap\D_{2})\leq \d(\D_{1,l})\t\d(\D_{1,r})\t \d(\D_{2}).
\end{equation}

\begin{lemma} 
\label{l2}
With above notation we have
\begin{enumerate}
\item \qquad
$
\d(\D_{1})\t\d(\D_{2})+
\d(\D_{1}\cup\D_{2})\t\d(\D_{1}\cap\D_{2})
\not\leq \d(\D_{1,l})\t\d(\D_{1,r})\t \d(\D_{2}).
$

\
\item
\hskip30mm
$
\d(\D_{1}\cup\D_{2})\t\d(\D_{1}\cap\D_{2})\leq
\d(\D_{1})\t\d(\D_{2}).
$

\end{enumerate}
\end{lemma}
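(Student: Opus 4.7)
Plan. My plan is to reduce both parts of the lemma to a single non-containment statement, deduce (2) from (1) purely formally, and then establish this statement by a Jacquet module comparison.

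For the reduction I apply Lemma \ref{l1} to the adjacent linked segments $\D_{1,l}$ and $\D_{1,r}$: the right endpoint of $\D_{1,l}$ is $\nu_\rho^{n_2-1}\rho$ and the left endpoint of $\D_{1,r}$ is $\nu_\rho^{n_2}\rho$, so the adjacency hypothesis of Lemma \ref{l1} holds, and $\D_{1,l}\cup\D_{1,r}=\D_1$. This yields
$$
\d(\D_{1,l})\t\d(\D_{1,r}) = L(\D_{1,l},\D_{1,r}) + \d(\D_1),
$$
and multiplying by $\d(\D_2)$ gives
$$
\d(\D_{1,l})\t\d(\D_{1,r})\t\d(\D_2) = L(\D_{1,l},\D_{1,r})\t\d(\D_2) + \d(\D_1)\t\d(\D_2).
$$
Substituting into (1) and cancelling the common summand $\d(\D_1)\t\d(\D_2)$ in the Grothendieck group, (1) becomes equivalent to
$$
\pi:=\d(\D_1\cup\D_2)\t\d(\D_1\cap\D_2) \not\leq L(\D_{1,l},\D_{1,r})\t\d(\D_2). \qquad (\ast)
$$

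Granting $(\ast)$, part (2) is immediate. By Lemma \ref{subset}, $\pi$ is irreducible. Combining \eqref{in2} with the identity above,
$$
\pi \leq L(\D_{1,l},\D_{1,r})\t\d(\D_2) + \d(\D_1)\t\d(\D_2).
$$
Since the irreducible representations form a $\Z$-basis of the Grothendieck group, the multiplicity of $\pi$ on the right is the sum of its multiplicities in the two summands. By $(\ast)$ the first multiplicity vanishes, so $\pi$ must appear with positive multiplicity in $\d(\D_1)\t\d(\D_2)$, which is precisely (2).

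To prove $(\ast)$, I would exhibit a Jacquet module term of $\pi$ that cannot occur in $L(\D_{1,l},\D_{1,r})\t\d(\D_2)$. Since $\pi \h \d(\D_1\cup\D_2)\t\d(\D_1\cap\D_2)$, Frobenius reciprocity places the irreducible factor $\d(\D_1\cup\D_2)\o\d(\D_1\cap\D_2)$ into the Jacquet module of $\pi$ at the parabolic of block sizes $(|\D_1\cup\D_2|,|\D_1\cap\D_2|)$. I would then compute the Jacquet module of $L(\D_{1,l},\D_{1,r})\t\d(\D_2)$ at the same level via the Hopf algebra, using $m^*(L(\D_{1,l},\D_{1,r}))=m^*(\d(\D_{1,r})\t\d(\D_{1,l}))-m^*(\d(\D_1))$, the formula \eqref{m^*}, and the componentwise multiplication in $R\o R$. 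Tracking cuspidal supports and identifying each irreducible factor via Lemma \ref{subset}, Proposition \ref{link}, and Lemma \ref{l1}, I would verify that no term of the form $\d(\D_1\cup\D_2)\o\d(\D_1\cap\D_2)$ survives.

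The main obstacle is precisely the final Jacquet module bookkeeping. The subtraction of $m^*(\d(\D_1))$ in the formula for $m^*(L(\D_{1,l},\D_{1,r}))$ is decisive: naively $m^*(\d(\D_{1,r})\t\d(\D_{1,l}))\cdot m^*(\d(\D_2))$ does produce contributions of shape $\d(\D_1\cup\D_2)\o\d(\D_1\cap\D_2)$, but they all arise from the $\d(\D_1)$ summand of $\d(\D_{1,r})\t\d(\D_{1,l})$ and so are killed by the subtraction. This reflects the fact that on the left of such a Jacquet module term, $\d(\D_1\cup\D_2)$ can only be produced through the Langlands subrepresentation $\d(\D_1)$, not through the Langlands quotient $L(\D_{1,l},\D_{1,r})$. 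Making the cancellation rigorous by a direct Hopf-algebraic computation is the delicate step of the proof.
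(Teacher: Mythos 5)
Your two formal reductions are correct, and the second one is in fact the paper's own argument: applying Lemma \ref{l1} to the adjacent pair $\D_{1,l},\D_{1,r}$ and cancelling the common summand $\d(\D_1)\t\d(\D_2)$ does reduce (1) to the statement $(\ast)$ that $\pi=\d(\D_1\cup\D_2)\t\d(\D_1\cap\D_2)$ is not a constituent of $L(\D_{1,l},\D_{1,r})\t\d(\D_2)$; and your deduction of (2) from (1), \eqref{in1}, \eqref{in2} and the irreducibility of $\pi$ (Lemma \ref{subset}) is exactly how the paper gets (2). The problem is that $(\ast)$ — which is the entire content of the lemma — is never proved. You only describe a computation and yourself flag it as "the delicate step." Worse, the claim on which that computation rests is false: the term $\d(\D_1\cup\D_2)\o\d(\D_1\cap\D_2)$ \emph{does} survive in $m^*(L(\D_{1,l},\D_{1,r})\t\d(\D_2))$, with multiplicity one. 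Indeed, writing $\D_1=[\nu_\rho^{n_1}\rho,\nu_\rho^{m_1}\rho]$, $\D_2=[\nu_\rho^{n_2}\rho,\nu_\rho^{m_2}\rho]$, the term occurs in $m^*(\d(\D_{1,l})\t\d(\D_{1,r})\t\d(\D_2))$ with multiplicity two: once from $(\d(\D_{1,l})\o 1)\cdot(1\o\d(\D_{1,r}))\cdot(\d(\D_2)\o 1)$, whose left component $\d(\D_{1,l})\t\d(\D_2)$ contains $\d(\D_1\cup\D_2)$ exactly once by Lemma \ref{l1}, and once from $(\d(\D_{1,l})\o 1)\cdot(\d(\D_{1,r})\o 1)\cdot\bigl(\d([\nu_\rho^{m_1+1}\rho,\nu_\rho^{m_2}\rho])\o\d(\D_1\cap\D_2)\bigr)$, whose left component again contains $\d(\D_1\cup\D_2)$ exactly once; while it occurs in $m^*(\d(\D_1)\t\d(\D_2))$ only once. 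Subtracting leaves multiplicity $2-1=1$, not $0$. Concretely, for $\D_1=[\rho,\nu_\rho\rho]$, $\D_2=[\nu_\rho\rho,\nu_\rho^2\rho]$ one has $L(\D_{1,l},\D_{1,r})=\z([\rho,\nu_\rho\rho])$ and the term $(\rho\o\nu_\rho\rho)\cdot(\d([\nu_\rho\rho,\nu_\rho^2\rho])\o 1)=(\rho\t\d([\nu_\rho\rho,\nu_\rho^2\rho]))\o\nu_\rho\rho$ of $m^*(\z([\rho,\nu_\rho\rho])\t\d([\nu_\rho\rho,\nu_\rho^2\rho]))$ contains $\d([\rho,\nu_\rho^2\rho])\o\nu_\rho\rho$. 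Your heuristic that "$\d(\D_1\cup\D_2)$ on the left can only be produced through the subrepresentation $\d(\D_1)$" conflates constituents of the induced representation with terms of its comultiplication: the surviving contribution comes from a nontrivial term of $m^*(L(\D_{1,l},\D_{1,r}))$ itself. Since this Jacquet module term occurs with the same multiplicity (namely one) in $m^*(\pi)$, comparing it cannot detect $(\ast)$ at all; the strategy, not merely the bookkeeping, fails.

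For comparison, the paper proves (1) by induction on $k=n_2-n_1$ rather than by exhibiting a missing Jacquet module term: assuming the inequality holds, one applies $m^*_{-\o\nu_\rho^{n_1}\rho}$ to both sides, which by \eqref{m^*} strips the first element off $\D_1$, $\D_{1,l}$ and $\D_1\cup\D_2$ simultaneously and reproduces the same inequality for ${}^-\D_1$ and $\D_2$, with $k$ decreased by one; the base case $k=1$ is impossible because the right-hand side then coincides with one of the two (nonzero) summands on the left. To complete your proof you would need either that descent argument or a genuinely different witness for $(\ast)$; the one you chose does not work.
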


\begin{proof} We prove (1) by induction with respect to $n_2-n_1=k$ (clearly, $k\geq 1$). 

Suppose that  the inequality does not hold for the difference $k$ (i.e. that the inequality holds).
Then obviously
$$
m^*_{-\o\nu^{n_1}_\rho\rho}(\d(\D_{1})\t\d(\D_{2})+\d(\D_{1}\cap\D_{2})\t\d(\D_{1}\cup\D_{2}))
\leq m^*_{-\o\nu^{n_1}_\rho\rho}(\d(\D_{1,l})\t\d(\D_{1,r})\t \d(\D_{2})).
$$
Now the formula \eqref{m^*} implies
$$
\d(^-\D_{1})\t\d(\D_{2})\o\nu^{n_1}_\rho\rho+
\d(\D_{1}\cap\D_{2})\t\d(^-(\D_{1}\cup\D_{2}))\o\nu^{n_1}_\rho\rho
\leq \d(^-\D_{1,l})\t\d(\D_{1,r})\t \d(\D_{2})\o\nu^{n_1}_\rho\rho.
$$
This obviously implies 
\begin{equation}
\label{A}
\d(^-\D_{1})\t\d(\D_{2})+
\d(\D_{1}\cap\D_{2})\t\d(^-(\D_{1}\cup\D_{2}))
\leq \d(^-\D_{1,l})\t\d(\D_{1,r})\t \d(\D_{2}).
\end{equation}

Suppose now $k=1$ and that the inequality does not hold in this case. Then the above inequality becomes in this case
$$
\d(^-\D_{1})\t\d(\D_{2})+
\d(\D_{1}\cap\D_{2})\t\d(^-(\D_{1}\cup\D_{2}))
\leq \d(^-\D_{1})\t \d(\D_{2}),
$$
which obviously can not hold. Therefore, the inequality holds for $k=1$.

Fix now $k\geq 2$, and assume that the inequality does not hold for the difference $k-1$. Observe that then $^-\D_1$ and $\D_2$ are linked, and that for them the inequality does not hold by the inductive assumption. Suppose that  the inequality  holds for the difference $k$. 
Now \eqref{A} and the fact that $^-(\D_{1}\cap\D_{2})=(^-\D_{1})\cap\D_{2}$ and $^-(\D_{1}\cup\D_{2})=(^-\D_{1})\cup\D_{2}$ imply
$$
\d(^-\D_{1})\t\d(\D_{2})+
\d(^-(\D_{1})\cap\D_{2})\t\d(^-(\D_{1})\cup\D_{2})
\leq \d(^-\D_{1,l})\t\d(\D_{1,r})\t \d(\D_{2}).
$$
Observe that $^-\D_1=(^-\D_{1,l})\cup\D_{1,r}$ is the decomposition of $^-\D_1$, corresponding the such a decomposition for $\D_1$.
This  contradicts  the inductive assumption, and it completes the proof of (1).

 Now (2) follows directly from \eqref{in1}, \eqref{in2} and (1), using the fact that 
$\d(\D_{1}\cup\D_{2})\t\d(\D_{1}\cap\D_{2})$ is irreducible (which follows from Lemma \ref{subset}).
\end{proof}

\begin{proposition}
Let $\D_1$ and $\Delta_2$ be two linked segments. Then in the Grothendieck group of the category of representations of finite length we have
$$
\d(\D_1)\t\d(\D_2)=L(\D_1,\D_2)+\d(\D_1\cup\D_2)\t\d(\D_1\cap\D_2).
$$
\end{proposition}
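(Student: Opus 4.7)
My plan is to establish equality in the Grothendieck group by showing both inequalities.

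For the lower bound $\d(\D_1)\t\d(\D_2) \geq L(\D_1,\D_2) + \d(\D_1\cup\D_2)\t\d(\D_1\cap\D_2)$, I would identify the two terms on the right as distinct irreducible subquotients of the left hand side. The representation $L(\D_1,\D_2)$ is the unique irreducible quotient of $\d(\D_2)\t\d(\D_1)$, which equals $\d(\D_1)\t\d(\D_2)$ in the Grothendieck group by commutativity of $R$. The representation $\d(\D_1\cup\D_2)\t\d(\D_1\cap\D_2)$ is irreducible by Lemma \ref{subset} (since $\D_1\cap\D_2\subseteq\D_1\cup\D_2$), and is a subquotient of $\d(\D_1)\t\d(\D_2)$ by Lemma \ref{l2}(2). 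The two are non-isomorphic because their Langlands parameters, viewed as multisets of segments, are $\{\D_1,\D_2\}$ and $\{\D_1\cup\D_2,\D_1\cap\D_2\}$, and these differ precisely because $\D_1$ and $\D_2$ are linked.

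For the upper bound, I would argue by induction on $|\D_1|+|\D_2|$. The base case is $\D_1\cap\D_2=\emptyset$ (equivalently $m_1+1=n_2$), which is exactly Lemma \ref{l1} upon interpreting $\d(\D_1\cap\D_2)$ as the trivial representation of the trivial group. For the inductive step, assume $\D_1\cap\D_2\neq\emptyset$. Applying Lemma \ref{l1} to the adjacent pair $(\D_{1,l},\D_2)$, whose intersection is empty and whose union is $\D_1\cup\D_2$, gives
$$\d(\D_{1,l})\t\d(\D_2) = L(\D_{1,l},\D_2) + \d(\D_1\cup\D_2)$$
in $R$. Multiplying by $\d(\D_{1,r})=\d(\D_1\cap\D_2)$ and invoking \eqref{in1}, I obtain
$$\d(\D_1)\t\d(\D_2) \leq L(\D_{1,l},\D_2)\t\d(\D_{1,r}) + \d(\D_1\cup\D_2)\t\d(\D_1\cap\D_2).$$
Combined with the lower bound, this reduces the problem to showing that the contribution of $L(\D_{1,l},\D_2)\t\d(\D_{1,r})$ to $\d(\D_1)\t\d(\D_2)$ beyond $\d(\D_1\cup\D_2)\t\d(\D_1\cap\D_2)$ is exactly $L(\D_1,\D_2)$ with multiplicity one.

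The main obstacle is this final length estimate; my approach is Jacquet module analysis. The formula \eqref{m^*} yields
$$m^*_{bottom}(\d(\D_1)\t\d(\D_2)) = \d(\D_1)\t\d({}^-\D_2)\o\nu_\rho^{n_2}\rho + \d({}^-\D_1)\t\d(\D_2)\o\nu_\rho^{n_1}\rho,$$
and each summand can be described by the inductive hypothesis applied to the smaller pairs $(\D_1,{}^-\D_2)$ and $({}^-\D_1,\D_2)$, supplemented by Proposition \ref{link} or Lemma \ref{subset} when these pairs fail to be linked. A hypothetical third irreducible subquotient of $\d(\D_1)\t\d(\D_2)$ beyond the two already identified would force an additional term in this bottom Jacquet module not accounted for by the inductive description, yielding a contradiction. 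The delicate part is the case analysis --- depending on whether $|\D_i|=1$, whether $|\D_1\cap\D_2|=1$, and what the relative positions of $({}^-\D_1,\D_2)$ and $(\D_1,{}^-\D_2)$ are --- and handling these cases uniformly is the technical heart of the argument.
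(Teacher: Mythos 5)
Your overall strategy coincides with the paper's: establish the lower bound by exhibiting $L(\D_1,\D_2)$ and $\d(\D_1\cup\D_2)\t\d(\D_1\cap\D_2)$ as distinct irreducible constituents (the latter via Lemma \ref{l2}(2) and Lemma \ref{subset}), then prove the length is at most two by induction, reading off $m^*_{bottom}(\d(\D_1)\t\d(\D_2))$. The detour through $L(\D_{1,l},\D_2)\t\d(\D_{1,r})$ is harmless but buys nothing, since, as you note, it leaves essentially the entire length estimate open. The problem is that the length estimate is exactly where you stop: the assertion that ``a hypothetical third irreducible subquotient would force an additional term in the bottom Jacquet module not accounted for'' is not justified, and justifying it is the content of the proof.

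Concretely, the inductive hypothesis (together with Proposition \ref{link} and Lemma \ref{subset} for the pairs $({}^-\D_1,\D_2)$ and $(\D_1,{}^-\D_2)$) tells you that $m^*_{bottom}(\d(\D_1)\t\d(\D_2))$ has length $3$ when $n_1+1=n_2$ and length $4$ when $n_1+2\leq n_2$. A hypothetical third constituent contributes at least one term to $m^*_{bottom}$, so you must show that the two known constituents already contribute $3$, respectively $4$, terms --- knowing only that each contributes at least one (hence at least $2$ in total) is not enough. What is actually needed, and what the paper supplies, is that $\d(\D_1\cup\D_2)\t\d(\D_1\cap\D_2)$ has Jacquet module terms ending both in $\dots\o\nu_\rho^{n_2}\rho$ and in $\dots\o\nu_\rho^{n_1}\rho$ (obtained from the two embeddings $\d(\D_1\cup\D_2)\t\d(\D_1\cap\D_2)\h\d(\D_1\cup\D_2)\t\nu_\rho^{m_1}\rho\t\dots\t\nu_\rho^{n_2}\rho$ and the commuted version $\d(\D_1\cap\D_2)\t\d(\D_1\cup\D_2)$, plus Frobenius reciprocity), so it hits \emph{both} summands of $m^*_{bottom}$; and that in the case $n_1+2\leq n_2$ the constituent $L(\D_1,\D_2)$, which embeds into $\d(\D_1)\t\d(\D_2)\h\d({}^-\D_1)\t\nu_\rho^{n_1}\rho\t\d(\D_2)\cong\d({}^-\D_1)\t\d(\D_2)\t\nu_\rho^{n_1}\rho$ (the last isomorphism using $n_1+2\leq n_2$ and Lemma \ref{subset}), likewise hits both summands. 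Only then does the count $2+1=3$, respectively $2+2=4$, close the argument. Without this mechanism --- embeddings, Frobenius reciprocity, and the commutation of an unlinked cuspidal factor --- your case analysis has no way to get off the ground, so as written the proposal does not yet constitute a proof.
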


\begin{proof} Observe that  by Lemma \ref{l1}, we know that the proposition holds in the case when $\D_1\cap\D_2=\emptyset$. Therefore, in the rest of the proof we need to concentrate to the case when $\D_1\cap\D_2\ne\emptyset$. This implies that $\D_1$ has at least two elements (and also $\D_2$).

 From the previous lemma, we know that  $\d(\D_1\cup\D_2)\t\d(\D_1\cap\D_2)$ is a sub quotient of $\d(\D_1)\t\d(\D_2)$. We know also that $L(\D_1,\D_2)$ is a subquotient. Therefore, for the proof of the proposition, it is enough to prove that the length of $\d(\D_1)\t\d(\D_2)$ is (at most) two.

The proof goes by induction with respect to card$(\D_1\cup\D_2)$. For the cardinality two and three, we have $\D_1\cap\D_2=\emptyset$, so the proposition holds in this case. Suppose that the cardinality of $\D_1\cup\D_2$ is $\geq 4$, and that the claim holds for strictly smaller cardinalities. Recall
\begin{equation}
\label{bottom}
m_{bottom}^*(\d(\D_1)\t\d(\D_2))=\d(\D_1)\t\d(^-\D_2)\o\nu_\rho^{n_2}\rho+
\d(^-\D_1)\t\d(\D_2)\o\nu_\rho^{n_1}\rho.
\end{equation}
Since the  subquotient $\d(\D_1\cup\D_2)\t\d(\D_1\cap\D_2)$ embeds into $\d(\D_1\cup\D_2)\t\nu_\rho^{m_1}\rho\t\dots \t \nu_\rho^{n_2}\rho$, Frobenius reciprocity implies that this sub quotient has in its Jacquet module a term of the form $\dots \o\nu_\rho^{n_2}\rho$. Now \eqref{bottom} implies that $\d(\D_1\cup\D_2)\t\d(\D_1\cap\D_2)$ must have in its Jacquet module an irreducible sub quotient of $\d(\D_1)\t\d(^-\D_2)\o\nu_\rho^{n_2}\rho$. We know from Lemma \ref{subset} that $\d(\D_1\cup\D_2)\t\d(\D_1\cap\D_2)\cong \d(\D_1\cap\D_2)\t\d(\D_1\cup\D_2)$. Now in  the same way as above, we conclude that $\d(\D_1\cup\D_2)\t\d(\D_1\cap\D_2)$ must have in its Jacquet module an irreducible sub quotient of 
$\d(^-\D_1)\t\d(\D_2)\o\nu_\rho^{n_1}\rho$.

We consider now two cases. The first is $n_1+1=n_2$. In this case,  the inductive assumption and Lemma \ref{subset} imply that \eqref{bottom} has length three.
Now the above analysis of  the Jacquet module of   $\d(\D_1\cup\D_2)\t\d(\D_1\cap\D_2)$ implies that $\d(\D_1)\t\d(\D_2)$ has the length (at most) two.

It remains to consider the case $n_1+2\leq n_2$. Observe that the irreducible subquotient $L(\D_1,\D_2)$ imbeds into  $\d(\D_1)\t\d(\D_2)$ (this is an elementary property of the Langlands classification). Now in the same was as above, we conclude that $L(\D_1,\D_2)$ must have in its Jacquet module an irreducible subquotient of $\d(\D_1)\t\d(^-\D_2)\o\nu_\rho^{n_2}\rho$. From \eqref{m^*} follows that $\d(\D_1)\h \d(^-\D_{1})\t\nu_\rho^{n_1}\rho$. Now $n_1+2\leq n_2$ and Lemma \ref{subset} imply $\nu_\rho^{n_1}\rho\t\d(\D_2)\cong \d(\D_2)\t\nu_\rho^{n_1}\rho$. 
From this follows
$$
L(\D_1,\D_2)\h \d(\D_1)\t\d(\D_2)\h\d(^-\D_{1})\t\nu_\rho^{n_1}\rho\t\d(\D_2)\cong \d(^-\D_{1})\t\d(\D_2)\t\nu_\rho^{n_1}\rho.
$$
Now in the same way as above, we conclude that $L(\D_1,\D_2)$ must have in its Jacquet module an irreducible subquotient of $\d(^-\D_1)\t\d(\D_2)\o\nu_\rho^{n_1}\rho$. Since by the inductive assumption the length of \eqref{bottom} in the case $n_1+2\leq n_2$  is four, the above observations about Jacquet modules of $\d(\D_1\cup\D_2)\t\d(\D_1\cap\D_2)$  and $L(\D_1,\D_2)$ imply that the length of $\d(\D_1)\t\d(\D_2)$ is two. This completes the proof of the proposition.
\end{proof}

Now using the factorization of the long intertwining operator from the Langlands classification, one gets from the above proposition the composition series of the generalized principal series (see Theorem 5.3 of \cite{T-Crelle}).

\end{document}